\numberwithin{equation}{section}
\newtheorem{Theorem}{Theorem}[section]
\newtheorem{Lemma}[Theorem]{Lemma}
\newtheorem{Proposition}[Theorem]{Proposition}
{ \theoremstyle{definition}
\newtheorem{Definition}[Theorem]{Definition}
\newtheorem{Note}[Theorem]{Note}
\newtheorem{Remark}[Theorem]{Remark} }
\begin{document}

\allowdisplaybreaks

\renewcommand{\thefootnote}{$\star$}

\newcommand{\arXivNumber}{1601.06898}

\renewcommand{\PaperNumber}{075}

\FirstPageHeading

\ShortArticleName{Orthogonal Polynomials Associated with Complementary Chain Sequences}

\ArticleName{Orthogonal Polynomials Associated\\ with Complementary Chain Sequences\footnote{This paper is a~contribution to the Special Issue on Orthogonal Polynomials, Special Functions and Applications. The full collection is available at \href{http://www.emis.de/journals/SIGMA/OPSFA2015.html}{http://www.emis.de/journals/SIGMA/OPSFA2015.html}}}

\Author{Kiran Kumar BEHERA~$^\dag$, A.~SRI RANGA~$^\ddag$ and A.~SWAMINATHAN~$^\dag$}

\AuthorNameForHeading{K.K.~Behera, A.~Sri Ranga and A.~Swaminathan}

\Address{$^\dag$~Department of Mathematics, Indian Institute of Technology Roorkee,\\
\hphantom{$^\dag$}~Uttarakhand-247667, India}
\EmailD{\href{mailto:krn.behera@gmail.com}{krn.behera@gmail.com}, \href{mailto:mathswami@gmail.com}{mathswami@gmail.com}}

\Address{$^\ddag$~Departamento de Matem\'{a}tica Aplicada, IBILCE, UNESP-Univ. Estadual Paulista,\\
\hphantom{$^\ddag$}~15054-000, S\~{a}o Jos\'{e} do Rio Preto, SP, Brazil}
\EmailD{\href{mailto:ranga@ibilce.unesp.br}{ranga@ibilce.unesp.br}}

\ArticleDates{Received March 17, 2016, in f\/inal form July 22, 2016; Published online July 27, 2016}

\Abstract{Using the minimal parameter sequence of a given chain sequence, we introduce the concept of complementary chain sequences, which we view as perturbations of chain sequences. Using the relation between these complementary chain sequences and the corresponding Verblunsky coef\/f\/icients, the para-orthogonal polynomials and the associated Szeg\H{o} polynomials are analyzed. Two illustrations, one involving Gaussian hypergeometric functions
and the other involving Carath\'{e}odory functions are also provided. A connection between these two illustrations by means of complementary chain sequences is also observed.}

\Keywords{chain sequences; orthogonal polynomials; recurrence relation; Verblunsky coef\/f\/icients; continued fractions; Carath\'{e}odory functions; hypergeometric functions}

\Classification{42C05; 33C45; 30B70}

\renewcommand{\thefootnote}{\arabic{footnote}}
\setcounter{footnote}{0}

\section{Preliminaries on Szeg\H{o} polynomials}\label{sec-complement_chain_SzegoPoly}

The Szeg\H{o} polynomials $\{\Phi_n\}$, also referred to as orthogonal polynomials on the unit circle (OPUC), enjoy the orthogonality property
\begin{gather*}
\int_{\partial\mathbb{D}}(\bar{z})^j\Phi_n(z)d\mu(z)= \int_{\partial\mathbb{D}}(z)^{-j}\Phi_n(z)d\mu(z)=0 \qquad\mbox{for}\quad
j=0,1,\dots,n-1,\quad n\geq1.
\end{gather*}
Here $\mu(z)=\mu(e^{i\theta})$ is a nontrivial measure def\/ined on the unit circle $\partial{\mathbb{D}}=\{z=e^{i\theta}\colon 0\leq\theta\leq2\pi\}$. Denoting the orthonormal Szeg\H{o} polynomials by $\phi_n(z)=\chi_n\Phi_{n}(z)$, we also have the equivalent def\/inition
\begin{gather*}
\int_{\partial\mathbb{D}}\phi_{n}(z) \overline{\phi_m(z)}d\mu(z)= \delta_{m,n}.
\end{gather*}
Further, def\/ining the moments $\mu_n=\int_{\partial{\mathbb{D}}} e^{-in\theta}d\mu(\theta)$, $n=0,\pm1,\dots$, where $\mu_{-n}=\bar{\mu}_n$, we have
\begin{gather*}
\int_{\partial\mathbb{D}}(\bar{z})^n \Phi_n(z) d\mu(z)= \frac{\Delta_n}{\Delta_{n-1}}\neq0, \qquad n=0,1,\dots.
\end{gather*}
Here $\Delta_n=\det\{\mu_{i-j}\}_{i,j=0}^{n}$ are the associated Toeplitz matrices with $\Delta_{-1}=1$.

The monic Szeg\H{o} polynomials satisfy the f\/irst order recurrence relations
\begin{gather*}
\Phi_{n}(z) =z\Phi_{n-1}(z)-\bar{\alpha}_{n-1}\Phi_{n-1}^{*}(z), \qquad
\Phi_{n}^{*}(z) =-\alpha_{n-1}z\Phi_{n-1}(z)+\Phi_{n-1}^{*}(z), \qquad n\geq1,
\end{gather*}
where $\Phi_{n}^{*}(z)=z^{n}\overline{\Phi_{n}(1/\bar{z})}$. The complex numbers $\alpha_{n-1}=-\overline{\Phi_n(0)}$ are called the Verblunsky coef\/f\/icients~\cite{Simon1}. The Verblunsky coef\/f\/icients completely characterize the Szeg\H{o} polynomials in the sense that any sequence $\{\alpha_{n-1}\}_{n=1}^{\infty}$ lying within the unit circle gives rise to a unique probability measure $\mu(z)$ which leads to a unique sequence of Szeg\H{o} polynomials. The above result, called the Verblunsky theorem in~\cite{Simon1}, is the analogue of Favard's theorem on the real line. Conversely, algorithms exist in the literature that extracts these coef\/f\/icients from any given Szeg\H{o} system of orthogonal polynomials. Notable among them are the Schur algorithm, the Levinson algorithm and their modif\/ied versions given in~\cite{DG,JNT-Surv,Wall}.

The Szeg\H{o} polynomials also satisfy the three term recurrence relation
\begin{gather}\label{eqn:szego_ttrr}
\Phi_{n+1}(z)=\left(\frac{\Phi_{n+1}(0)}{\Phi_n(0)}+z\right) \Phi_{n}(z)-\frac{(1-|\Phi_n(0)|^2)\Phi_{n+1}(0)}{\Phi_n(0)} z\Phi_{n-1}(z), \qquad n\geq1,
\end{gather}
with $\Phi_0(z)=1$ and $\Phi_1(z)=z+\Phi_1(0)$. Note that if $\Phi_n(0) = 0$, $n\geq1$, then the three term recurrence relation ceases to exist. In such a case, $\Phi_n(z)=z^n$, which is given as the free case in \cite[p.~85]{Simon1}. Denoting
\begin{gather*}
\eta_{n+1}=\frac{\Phi_{n+1}(0)}{\Phi_n(0)} \qquad\mbox{and}\qquad \rho_{n+1}=\frac{\big(1-|\Phi_n(0)|^2\big)\Phi_{n+1}(0)}{\Phi_n(0)}, \qquad n\geq1,
\end{gather*}
the following expressions are easily obtained from~\eqref{eqn:szego_ttrr}:
\begin{gather*}
\Phi_{n+1}(0)= \frac{\Phi_n(0)}{1-|\Phi_n(0)|^2} \frac{\int_{\partial\mathbb{D}} z \Phi_{n}(z)d\mu(z)} {\int_{\partial\mathbb{D}}z \Phi_{n-1}(z)d\mu(z)},
\qquad 1-|\Phi_n(0)|^2= \frac{\int_{\partial\mathbb{D}} z^{-n} \Phi_{n}(z)d\mu(z)} {\int_{\partial\mathbb{D}} z^{-(n-1)} \Phi_{n-1}(z)d\mu(z)}
\end{gather*}
and
\begin{gather}\label{eqn:szego_norm}
\chi_{n}^{-2}= \frac{\rho_{2}\rho_3\cdots\rho_{n+1}} {\eta_2\eta_3\cdots\eta_{n+1}}\mu_0= \mu_0\big(1-|\Phi_1(0)|^2\big)\big(1-|\Phi_2(0)|^2\big)\cdots \big(1-|\Phi_n(0)|^2\big).
\end{gather}
For early developments on the subject, we refer to the monographs \cite{Freud,Gero,Szego}. For a compendium of modern research in the area as well as historical notes, we refer to~\cite{Simon1,Simon2}.

In order to develop a quadrature formula on the unit circle, Jones et al.~\cite{JNT-Mom} introduced the para-orthogonal polynomials which vanish only on the unit circle and, for $z,\omega_n\in\mathbb{C}$ with $|\omega_n|=1$, have the representation
\begin{gather*}
\mathcal{X}_{n}(z,\omega_n)=\Phi_n(z)+ \omega_n\Phi^{*}_{n}(z), \qquad n\geq1.
\end{gather*}
The para-orthogonal polynomials satisfy the properties
\begin{gather*}
\langle \mathcal{X}_n, z^m\rangle=0, \qquad m=1,2,\dots,n-1, \qquad \langle \mathcal{X}_n, 1\rangle\neq0, \qquad \langle \mathcal{X}_n, z^n\rangle\neq0,
\end{gather*}
which are termed as def\/iciency in the orthogonality of these para-orthogonal polynomials. In recent years, these para-orthogonal polynomials have been linked to kernel polynomials $K_n(z,\omega)$, see~\cite{Ranga_chn,Golli,Wong}. The kernel polynomials $K_n(z,\omega)$ satisfy the Christof\/fel--Darboux formula
\begin{gather*}
K_n(z,\omega)=\sum_{k=0}^{n}\phi_k(z)\overline{\phi_k(\omega)}= \frac{\phi^{*}_{n+1}(z)\overline{\phi^{*}_{n+1}(\omega)}-\phi_{n+1}(z)\overline{\phi_{n+1}(\omega)}}{1-z\bar{\omega}}.
\end{gather*}
Denoting $\tau_n(\omega)=\Phi_{n}(\omega)/\Phi^{*}_{n}(\omega)$, for $n\geq1$, the monic kernel polynomials related to the Szeg\H{o} polynomials are given by
\begin{gather}\label{eqn:szego_monic_kernel}
P_{n}(\omega;z)=\frac{z\Phi_{n}(z)- \omega\tau_{n}(\omega)\Phi^{*}_{n}(z)}{z-\omega}, \qquad n\geq1,
\end{gather}
and are shown in \cite{Ranga_chn} to satisfy a three term recurrence relation of the form
\begin{gather*}
P_{n+1}(\omega;z)=[z+b_{n+1}(\omega)]P_{n}(\omega;z)-a_{n+1} (\omega)zP_{n-1}(\omega;z), \qquad n\geq1,
\end{gather*}
where
\begin{gather*}
b_n(\omega)=\frac{\tau_n(\omega)}{\tau_{n-1}(\omega)}, \qquad a_{n+1}=[1+\tau_n(\omega)\alpha_{n-1}] \big[1-\overline{\omega\tau_n(\omega)\alpha_n}\big]\omega,
\qquad n\geq1.
\end{gather*}
The polynomials $P_{n}(\omega;z)$ are $\overline{\tau_n(w)}$-invariant sequences of polynomials which can be easily verif\/ied from~\eqref{eqn:szego_monic_kernel}. Note that a sequence of polynomials $\{\mathcal{Y}_{n}\}$ is called $\tau_n$-invariant if~\cite{JNT-Mom}
\begin{gather*}
\mathcal{Y}^{*}_{n}(z)=\tau_n\mathcal{Y}_{n}(z),\qquad n\geq1.
\end{gather*}
An important concept that is used in the sequel is the theory of chain sequences. We give a~brief introduction to chain sequences and then illustrate the role played by them in the theory of OPUC.

A sequence $\{d_n\}_{n=1}^{\infty}$ which satisf\/ies
\begin{gather*}
d_{n}=(1-g_{n-1})g_n, \qquad n\geq1,
\end{gather*}
is called a positive chain sequence \cite{Chihara} (see also \cite[Section~7.2]{Ismail}). Here $\{g_n\}_{n=0}^{\infty}$, called the parameter sequence is such that $0\leq g_0<1$, $0<g_n<1$ for $n\geq1$. This is a stronger condition than the one used in~\cite{Wall}, in which $d_n$ is also allowed to be zero. The parameter sequen\-ce~$\{g_n\}_{n=0}^{\infty}$ is called a minimal parameter sequence and denoted by $\{m_n\}_{n=0}^{\infty}$ if $m_0=0$. Every chain sequence has a~minimal parameter sequence \cite[pp.~91--92]{Chihara}. Further, for a f\/ixed chain sequence $\{d_n\}_{n\geq1}$, let $\mathcal{G}$ be the set of all parameter sequen\-ces~$\{g_k\}$ of $\{d_n\}_{n\geq1}$. Let the sequen\-ce~$\{M_n\}_{n=0}^{\infty}$ be def\/ined by
\begin{gather*}
M_n=\inf\{g_n,\,\mbox{for each } n,\,\{g_k\}\in\mathcal{G}\}, \qquad n\geq0,
\end{gather*}
where $\inf$ is inf\/imum of the set. Then, $\{M_n\}$ is called the maximal parameter sequence of $\{d_n\}$.

The role of chain sequences in the study of orthogonal polynomials on the real line is well known. Similarly, a positive chain sequence $\{d_n\}$ appears
in the three term recurrence relation for the polynomials~$R_n(z)$, that turn out to be scaled versions of the (kernel) polynomials $P_n(1,z)$, namely,
\begin{gather}\label{eqn:POP_ttrr_c_n-d_n}
R_{n+1}(z)=[(1+ic_{n+1})z+(1-ic_{n+1})]R_n(z)-4d_{n+1}zR_{n-1}(z), \qquad n\geq1,
\end{gather}
with $R_0(z)=1$ and $R_1(z)= (1+ic_1)z+(1-ic_1)$. It is indeed shown in \cite{Ranga_chn}, that
\begin{gather*}
R_n(z)=\frac{\prod\limits_{j=0}^{n-1}[1-\tau_j\alpha_j]} {\prod\limits_{j=0}^{n-1}[1-\operatorname{Re}(\tau_j\alpha_j)]} P_{n}(1;z),
\end{gather*}
where
\begin{gather*}
 \tau_{j} = \tau_j(1)= \prod_{k=1}^{j} \frac{1-ic_k}{1+ic_k}, \qquad j \geq 1,
\end{gather*}
on condition that
\begin{gather*}
c_n=\frac{-\operatorname{Im}(\tau_{n-1}\alpha_{n-1})} {1-\operatorname{Re}(\tau_{n-1}\alpha_{n-1})}
\qquad \mbox{and}\qquad d_{n+1}= (1-g_{n}) g_{n+1}, \qquad n\geq1,
\end{gather*}
is a chain sequence with parameter sequence
\begin{gather*}
 g_{n} = \frac{1}{2} \frac{|1-\tau_{n-1}\alpha_{n-1}|^2} {[1-\operatorname{Re}(\tau_{n-1}\alpha_{n-1})]}, \qquad n \geq 1.
\end{gather*}
It is also not dif\/f\/icult to verify that in this case $R_n(z)$ has $r_{n,n}=\prod\limits_{k=1}^{n}(1+ic_k)$ as the leading coef\/f\/icient and $r_{n,0}=\bar{r}_{n,n}=\prod\limits_{k=1}^{n}(1-ic_k)$ as the constant term.

It is known that $\{R_n(z)\}$ can be used to obtain a sequence of OPUC~\cite{Swami,Ranga_fav,Ranga_chn}, with respect to the measure $\mu(z)$ and having the shifted sequence $\{\alpha_{n-1}\}_{n=1}^{\infty}$ as the Verblunsky coef\/f\/icients. A~further interesting fact is that the above parameter sequence
$\{g_{n+1}\}_{n=0}^{\infty}$ is such that $g_{1} = (1-\epsilon)M_{1}$, ($0\leq\epsilon<1$), where $\{M_{n+1}\}_{n=0}^{\infty}$ is the maximal parameter sequence of $\{d_{n+1}\}_{n=1}^{\infty}$ and that $\epsilon$, is the size of the pure point at $z=1$ in the probability measure $\mu(z)$ associated with the
Verblunsky coef\/f\/icients $\{\alpha_{n-1}\}_{n=1}^{\infty}$. This means, if the measure does not have a pure point at $z=1$ then $\{g_{n+1}\}_{n=0}^{\infty}$ is the maximal parameter sequence of $\{d_{n+1}\}_{n=1}^{\infty}$.

Consider now the Uvarov transformation of the measure $\mu(z)$, \cite[p.~11]{Ranga_chn},
\begin{gather*}
 \int_{\partial\mathbb{D}} f(z)d\mu^{(t)}(z) = \frac{(1-t)}{1-\epsilon}\int_{\partial\mathbb{D}} f(z) d\mu(z) + \frac{t-\epsilon}{1-\epsilon} f(1),
\end{gather*}
so that $\mu^{(t)}(z)$ has a jump $t$, $0\leq t<1$, at $z=1$. These measures $\mu^{(t)}(z)$ are associated with the positive chain sequence $\{d_n\}_{n=1}^{\infty}$ obtained from $\{d_{n+1}\}_{n=1}^{\infty}$ by including the additional term $d_1 = (1-t)M_1$. We also denote the generalized sequence of Verblunsky coef\/f\/icients associated with $\mu^{(t)}(z)$ by $\big\{\alpha_{n-1}^{(t)}\big\}_{n=1}^{\infty}$.

\begin{Note}
Since the measure $\mu^{(t)}(z)$ has a parameter `$t$', the notations for the polyno\-mials~$R_n(z)$ and the sequences $\{c_n\}$, $\{d_n\}$ should have involved a $'t'$. However, it has been proved~\cite[p.~7]{Ranga_chn}, that the kernel polynomials $P_n(1;z)$ and hence $R_n(z)$ as well as the sequen\-ces~$\{c_n\}$ and~$\{d_n\}$ are independent of `$t$' and so their notations are devoid of~`$t$'. But the minimal parameters depend on $d_1$ and this has been ref\/lected in the notation~$m_n^{(t)}$.
\end{Note}

As shown in \cite[Theorem~1.1]{Ranga_chn}, $\mu^{(t)}(z)$ can also be given by
\begin{align}\label{eqn:relation between measure with jump t and measure with zero jump}
 \int_{\partial\mathbb{D}} f(z)d\mu^{(t)}(z) =
 (1-t)\int_{\partial\mathbb{D}} f(z) d\mu^{(0)}(z) +
 t f(1).
\end{align}
As is obvious from the notation, $\mu^{(0)}(z)$ are the measures arising when $t=0$. This is the case when $d_1=M_1$, so that both the minimal and maximal parameter sequences coincide. This equality can also be interpreted as the measure having zero jump.

Further, the Verblunsky coef\/f\/icients $\alpha_{n-1}^{(t)}$ have the representation
\begin{gather}\label{eqn:gen_verb_coeff}
\alpha_{n-1}^{(t)} = \overline{\tau}_n \left[\frac{1-2m_n^{(t)}-ic_{n}} {1+ic_{n}}\right],\qquad n\geq 1.
\end{gather}
where $\big\{m_n^{(t)}\big\}$ is the minimal parameter sequence of the positive chain sequence $\{d_n\}_{n=1}^{\infty}$. The Szeg\H{o} polynomials corresponding to~\eqref{eqn:gen_verb_coeff} are \cite[Theorem~5.2]{Ranga_fav}
\begin{gather}\label{eqn:szego_from_gen_verb_coeff}
\Phi_n^{(t)}(z)=\frac{R_n(z)-2\big(1-m_n^{(t)}\big)R_{n-1}(z)}{\prod\limits_{k=1}^{n}(1+ic_k)},\qquad n\geq1.
\end{gather}

It can be verif\/ied from~\eqref{eqn:POP_ttrr_c_n-d_n} that if $c_k=0$, $k\geq0$, $\alpha_{n-1}$, $n \geq 1$, are all real. The $R_n(z)$ are then the singular predictor polynomials of the second kind given in~\cite{DG}. Indeed, if $c_n=0$, $n\geq1$, it can be easily shown from~\eqref{eqn:szego_from_gen_verb_coeff} that
\begin{gather*}
(z-1)R_n(z) = z\Phi_{n}^{(t)}(z) - \big(\Phi^{(t)}_{n}\big)^{*}(z).
\end{gather*}
We would like to mention here that the Szeg\H{o} polynomials, Verblunsky coef\/f\/icients and the related measure have also been obtained for the para-orthogonal polynomials that are an extension of singular predictor polynomials of f\/irst kind. See~\cite{Swami} for the details of these extensions and also for a survey of recent developments in the theory connecting chain sequences and OPUC.

The purpose of the present manuscript is to introduce a particular perturbation in the chain sequence $\{d_n\}$, called the complementary chain sequence,
and study its ef\/fect on the Verblunsky coef\/f\/icients of the corresponding Szeg\H{o} polynomials. The motivation for this follows from the fact that~\eqref{eqn:gen_verb_coeff} guarantees an explicit relation between the Verblunsky coef\/f\/icients and the minimal parameter sequence~$\big\{m_n^{(t)}\big\}$ of~$\{d_n\}$.

This manuscript is organized as follows. In Section~\ref{sec-complement_chain_basics} the concept of complementary chain sequences using the minimal parameter sequences is introduced. Using this concept, perturbations of Verblunsky coef\/f\/icients are studied. As an illustration of this concept, in Section~\ref{sec-complement_carat}, the Szeg\H{o} polynomials which characterizes the positive Perron--Carath\'{e}odory (PPC) fractions from a particular chain sequence are constructed. An interplay by these PPC fractions in f\/inding a relation between this chain sequence, its complementary chain sequence and their respective Carath\'{e}odry functions is obtained in this section. In Section~\ref{sec-complement_gauss_hyper}, another illustration of characterizing the
Szeg\H{o} polynomials using Gaussian hypergeometric functions is provided. For particular values, using complementary chain sequences, the corresponding Verblunsky coef\/f\/icients of these Szeg\H{o} polynomials are also shown to be perturbed Verblunsky coef\/f\/icients obtained earlier.

\section{Complementary chain sequences}\label{sec-complement_chain_basics}
As is obvious from the def\/inition of chain sequences, the minimal and maximal parameter sequences are uniquely def\/ined for any given chain sequence. Also, the chain sequence for which the minimal and maximal parameter sequences coincide, that is, $M_0=0$, has its own importance as illustrated in the previous section. Such a chain sequence is said to determine its parameters uniquely and is referred to as a single parameter positive chain sequence (SPPCS)~\cite{Swami}. By Wall's criteria for maximal parameter sequence~\cite[p.~82]{Wall}, this is equivalent to
\begin{gather}\label{eqn:wall_criteria_sppcs}
\sum_{n=1}^{\infty}\frac{m_1}{1-m_1} \cdot\frac{m_2}{1-m_2} \cdot\frac{m_3}{1-m_3} \cdots\frac{m_n}{1-m_n}=\infty.
\end{gather}
Thus, introducing a perturbation in the minimal parameters $m_n$ will lead to a uniquely def\/ined change in the chain sequence.

\begin{Definition} Suppose $\{d_n\}_{n=1}^{\infty}$ is a chain sequence with $\{m_n\}_{n=0}^{\infty}$ as its minimal parameter sequence. Let $\{k_n\}_{n=0}^{\infty}$ be another sequence given by $k_0=0$ and $k_n=1-m_n$ for $n\geq1$. Then the chain sequence $\{a_n\}_{n=1}^{\infty}$ having
$\{k_n\}_{n=0}^{\infty}$ as its minimal parameter sequence is called the complementary chain sequence of $\{d_n\}$.
\end{Definition}

Such chain sequences enjoy interesting relations like \cite[equation~(75.3)]{Wall}
\begin{gather*}
\frac{\sqrt{1+z}}{1+\frac{d_1z}{1+\frac{d_2z} {1+\frac{d_3z}{1+\ddots}}}} \cdot \frac{\sqrt{1+z}}{1+\frac{a_1z}{1+\frac{a_2z}{1+\frac{a_3z}{1+\ddots}}}}=1.
\end{gather*}
They also satisfy
\begin{gather*}
d_1-a_1=1-2k_1=2m_1-1
\end{gather*}
and
\begin{gather*}
d_n-a_n=\triangle{m_{n-1}}=-\nabla{k_n},\qquad n\geq2.
\end{gather*}
where $\triangle$ and $\nabla$ are the forward and backward dif\/ference operators respectively. Further of particular interest is the ratio of these two chain sequences given by
\begin{gather*}
\frac{d_1}{a_1}=\frac{m_1}{1-m_1}, \qquad \frac{d_n}{a_n}= \frac{k_{n-1}}{1-k_{n-1}} \frac{m_{n}}{1-m_{n}}, \qquad n\geq2.
\end{gather*}
This implies
\begin{gather}\label{eqn:min_par_comp_chn_seq}
\frac{m_n}{1-m_n}=\frac{d_n}{a_n}\frac{m_{n-1}}{1-m_{n-1}}=\cdots=\frac{d_{n}d_{n-1}\cdots d_{1}}{a_{n}a_{n-1}\cdots a_{1}},\qquad n\geq1.
\end{gather}
Substituting \eqref{eqn:min_par_comp_chn_seq} in \eqref{eqn:wall_criteria_sppcs}, we have the following lemma.

\begin{Lemma}\label{lem:criteria_sppcs_intermsof_comp_chn_seq} Let $\{d_n\}_{n=1}^{\infty}$ and $\{a_n\}_{n=1}^{\infty}$
be two complementary chain sequences of each other. Then $\{d_n\}_{n=1}^{\infty}$ will be a SPPCS if and only if
\begin{gather*}
\sum_{n=1}^{\infty}\prod_{j=1}^{n}\frac{d_{1}d_{2}\cdots d_{j}} {a_{1}a_{2}\cdots a_{j}}=\infty.
\end{gather*}
\end{Lemma}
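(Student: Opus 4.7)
The plan is to reduce the claim directly to Wall's criterion~\eqref{eqn:wall_criteria_sppcs} by substituting the identity~\eqref{eqn:min_par_comp_chn_seq}, which the paper has already derived. The whole argument is essentially a mechanical substitution, so I expect the proof to be short.

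First I would recall that, by Wall's characterization in~\eqref{eqn:wall_criteria_sppcs}, the sequence $\{d_n\}_{n=1}^{\infty}$ is a SPPCS if and only if $\sum_{n=1}^{\infty}\prod_{k=1}^{n}\frac{m_k}{1-m_k}=\infty$, where $\{m_n\}_{n=0}^{\infty}$ is the minimal parameter sequence of $\{d_n\}$. Next, I would invoke~\eqref{eqn:min_par_comp_chn_seq}, which asserts that $\frac{m_k}{1-m_k}=\frac{d_1 d_2\cdots d_k}{a_1 a_2\cdots a_k}$ for every $k\geq 1$, with $\{a_n\}$ the complementary chain sequence of $\{d_n\}$. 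Multiplying this identity for $k=1,2,\dots,n$ yields
\[
\prod_{k=1}^{n}\frac{m_k}{1-m_k}=\prod_{k=1}^{n}\frac{d_1 d_2\cdots d_k}{a_1 a_2\cdots a_k},
\]
and summing over $n\geq 1$ turns Wall's criterion into exactly the condition stated in the lemma. Since \eqref{eqn:wall_criteria_sppcs} and \eqref{eqn:min_par_comp_chn_seq} hold regardless of which direction of the equivalence one considers, both implications are established simultaneously.

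The only thing to be careful about is that all the genuine content sits inside~\eqref{eqn:min_par_comp_chn_seq}. The paper obtains that identity from the telescoping recursion $\frac{m_n}{1-m_n}=\frac{d_n}{a_n}\cdot\frac{m_{n-1}}{1-m_{n-1}}$ for $n\geq 2$, together with the base case $\frac{m_1}{1-m_1}=\frac{d_1}{a_1}$, which in turn comes from $m_0=k_0=0$ combined with $a_1=1-m_1$ and $d_1=m_1$. Once this telescope is in hand, no further ingredient is needed, so the lemma follows as an immediate corollary of Wall's criterion. I do not anticipate any real obstacle; the main point to get right is the starting index of the telescope, since running it one step too far would involve $m_0/(1-m_0)=0$ and collapse the formula trivially.
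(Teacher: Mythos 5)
Your proposal is correct and follows essentially the same route as the paper, which likewise obtains the lemma simply by substituting the telescoped identity \eqref{eqn:min_par_comp_chn_seq} into Wall's criterion \eqref{eqn:wall_criteria_sppcs}; the term-by-term equality of the two series gives both directions of the equivalence at once. Your accounting of where \eqref{eqn:min_par_comp_chn_seq} comes from (the base case $d_1/a_1=m_1/(1-m_1)$ from $m_0=k_0=0$, and the recursion for $n\geq 2$) is accurate.
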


\begin{Remark} The above lemma is useful while considering a chain sequence and its complementary chain sequence without using the information on the corresponding minimal parameters.
\end{Remark}

\begin{Lemma}\label{lem:criteria2_sppcs_intermsof_comp_chn_seq} Let $\{d_n\}_{n=1}^{\infty}$ and $\{a_n\}_{n=1}^{\infty}$ be two complementary chain sequences of each other. If $\{d_n\}_{n=1}^{\infty}$ is not a SPPCS, then $\{a_n\}_{n=1}^{\infty}$ is a SPPCS.
\end{Lemma}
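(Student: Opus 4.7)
The plan is to use Wall's criterion \eqref{eqn:wall_criteria_sppcs} for both $\{d_n\}$ and $\{a_n\}$ and exploit the fact that the two resulting infinite products are reciprocals of one another term by term, so at most one of the series can converge.

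First I would observe that the complementary relation is symmetric: if $\{m_n\}$ with $m_0=0$ is the minimal parameter sequence of $\{d_n\}$ and $\{k_n\}$ with $k_0=0$, $k_n = 1-m_n$ is the minimal parameter sequence of $\{a_n\}$, then the sequence defined from $\{k_n\}$ by the complementary recipe has initial term $0$ and subsequent terms $1-k_n = m_n$, which recovers $\{m_n\}$. Hence $\{d_n\}$ is the complementary chain sequence of $\{a_n\}$, so Lemma~\ref{lem:criteria_sppcs_intermsof_comp_chn_seq} applies to $\{a_n\}$ as well and gives the symmetric criterion
\begin{gather*}
\{a_n\}\ \text{is a SPPCS}\iff\sum_{n=1}^{\infty}\prod_{j=1}^{n}\frac{a_1 a_2\cdots a_j}{d_1 d_2\cdots d_j}=\infty.
\end{gather*}

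Next I would set $p_n = \prod_{j=1}^{n}\tfrac{d_1\cdots d_j}{a_1\cdots a_j}$, so that $1/p_n = \prod_{j=1}^{n}\tfrac{a_1\cdots a_j}{d_1\cdots d_j}$. By the hypothesis that $\{d_n\}$ is not a SPPCS, Lemma~\ref{lem:criteria_sppcs_intermsof_comp_chn_seq} gives $\sum_{n=1}^{\infty} p_n < \infty$, which forces $p_n \to 0$. Consequently $1/p_n \to \infty$, so the terms of the series $\sum_{n=1}^{\infty} 1/p_n$ do not even tend to zero, and the series diverges. By the symmetric criterion above, this is exactly the Wall criterion for $\{a_n\}$, so $\{a_n\}$ is a SPPCS.

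There is really no technical obstacle here. The only small point to be careful about is the symmetry of complementarity, since the definition introduces $\{a_n\}$ through the particular auxiliary sequence $k_n = 1-m_n$ (with the prescribed $k_0=0$) rather than by a manifestly symmetric formula; once that symmetry is noted, the proof reduces to the elementary remark that if a product tends to zero then its reciprocal tends to infinity. The same argument can be run directly with the Wall products $\prod_{j=1}^{n}\tfrac{m_j}{1-m_j}$ and $\prod_{j=1}^{n}\tfrac{1-m_j}{m_j}$ via \eqref{eqn:wall_criteria_sppcs}, bypassing Lemma~\ref{lem:criteria_sppcs_intermsof_comp_chn_seq} entirely, but the formulation through Lemma~\ref{lem:criteria_sppcs_intermsof_comp_chn_seq} is more in the spirit of the section and avoids reference to the $m_n$.
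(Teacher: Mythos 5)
Your proof is correct and is essentially the paper's own argument: the paper applies Wall's criterion \eqref{eqn:wall_criteria_sppcs} directly to the minimal parameters, observing that convergence of $\sum_{n}\prod_{j=1}^{n} m_j/(1-m_j)$ forces the partial products to tend to $0$, so the reciprocal products $\prod_{j=1}^{n} k_j/(1-k_j)$ tend to infinity and their series diverges. Your detour through Lemma~\ref{lem:criteria_sppcs_intermsof_comp_chn_seq} and the symmetry of complementarity is only a cosmetic repackaging of that same step, as you yourself note at the end.
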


\begin{proof} If $\{d_n\}_{n=1}^{\infty}$ is not a SPPCS then its minimal parameter sequence $\{m_n\}_{n=0}^{\infty}$ is such that
\begin{gather*}
\sum_{n=1}^{\infty}\prod_{j=1}^{n}\frac{m_{j}} {1-m_{j}} < \infty.
\end{gather*}
Hence, $\lim\limits_{n \to \infty} \prod\limits_{j=1}^{n} m_{j}/(1-m_{j}) = 0$, and we have
\begin{gather*}
 \sum_{n=1}^{\infty}\prod_{j=1}^{n}\frac{k_{j}}{1-k_{j}} = \sum_{n=1}^{\infty}\prod_{j=1}^{n}\frac{1-m_{j}}{m_{j}} = \infty.
\end{gather*}
Thus, concluding the proof of the lemma.
\end{proof}

\begin{Lemma}\label{lem:criteria_sppcs_1/2} Let $\{d_n\}_{n=1}^{\infty}$ be a chain sequence and $\{a_n\}_{n=1}^{\infty}$ be its complementary chain sequence with minimal parameter sequences $\{m_n\}_{n=0}^{\infty}$ and $\{k_n\}_{n=0}^{\infty}$
respectively.
\begin{enumerate}\itemsep=0pt
\item[--] If $0<m_n<1/2$, $n\geq1$, then $a_n$ is a SPPCS.\label{item2}
\item[--] If $1/2<m_n<1$, $n\geq1$, then $d_n$ is a SPPCS.\label{item3}
\end{enumerate}
\end{Lemma}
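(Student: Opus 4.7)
The plan is to reduce everything to Wall's criterion \eqref{eqn:wall_criteria_sppcs} and exploit the symmetry $k_n = 1 - m_n$ built into the definition of the complementary chain sequence. Since the criterion involves the ratios $m_j/(1-m_j)$ (respectively $k_j/(1-k_j) = (1-m_j)/m_j$), the threshold $m_j = 1/2$ is exactly the point where these ratios cross $1$, which is what makes the two hypotheses in the lemma natural.

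First I would handle the case $0 < m_n < 1/2$ for all $n \geq 1$. Here $1 - m_n > m_n$, so
\begin{gather*}
\frac{k_n}{1-k_n} = \frac{1-m_n}{m_n} > 1, \qquad n \geq 1,
\end{gather*}
and consequently every finite product $\prod_{j=1}^{n} k_j/(1-k_j)$ exceeds $1$. The series $\sum_{n=1}^{\infty} \prod_{j=1}^{n} k_j/(1-k_j)$ therefore diverges trivially (its terms do not even tend to $0$), and Wall's criterion \eqref{eqn:wall_criteria_sppcs}, applied to the minimal parameter sequence $\{k_n\}$ of $\{a_n\}$, yields that $\{a_n\}$ is an SPPCS.

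The second case $1/2 < m_n < 1$ is completely symmetric: now $m_n/(1-m_n) > 1$ for every $n$, so $\prod_{j=1}^{n} m_j/(1-m_j) > 1$ and the series in \eqref{eqn:wall_criteria_sppcs} diverges, making $\{d_n\}$ an SPPCS.

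There is essentially no obstacle here; the lemma is a one-line consequence of Wall's criterion once one notices that the roles of $m_n$ and $1-m_n$ are interchanged in passing from $\{d_n\}$ to $\{a_n\}$, and that the bound $m_n \lessgtr 1/2$ forces the relevant ratio to be bounded below by $1$. The only care needed is to state clearly that divergence of the series follows because the general term is bounded away from zero, rather than invoking any more delicate estimate.
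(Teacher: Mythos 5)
Your proof is correct and follows exactly the same route as the paper's: observe that the hypothesis forces $k_n/(1-k_n)>1$ (respectively $m_n/(1-m_n)>1$) for all $n\geq1$, so the partial products in Wall's criterion \eqref{eqn:wall_criteria_sppcs} are bounded below by $1$ and the series diverges. Your added remark that divergence holds because the general term does not tend to zero is a welcome explicit justification of the step the paper leaves implicit.
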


\begin{proof} Observe that if $0<m_n<1/2$, $k_n/(1-k_n)>1$ for all $n\geq1$. Similarly, $1/2<m_n<1$ implies $m_n/(1-m_n)>1$ for all $n\geq1$. The results now follow from~\eqref{eqn:wall_criteria_sppcs}.
\end{proof}

It is known that \cite[p.~79]{Wall} if $d_n\geq1/4$, $n\geq1$, every parameter sequence $\{g_n\}$, in particular the minimal parameter sequence $\{m_n\}$ of $\{d_n\}$ is non-decreasing. For the special case when $d_n=1/4$, $n\geq1$, $m_n\rightarrow1/2$ as $n\rightarrow\infty$. This implies $0<m_n<1/2$, $n\geq1$.
By Lemma~\ref{lem:criteria_sppcs_1/2}, $\{a_n\}$~is a SPPCS. In other words, the chain sequence complementary to the constant chain sequen\-ce~$\{1/4\}$ determines its parameters~$g_n$ uniquely, which are further given by
\begin{gather*}
g_0=0,\qquad g_n=\frac{n+2}{2(n+1)}, \qquad n\geq1.
\end{gather*}
Moreover, if $d_n\geq1/4$, there exist some $n\in\mathbb{N}$ such that $a_n<1/4\leq d_n$. Indeed,
\begin{gather*}
d_n=(1-m_{n-1})m_n\geq m_{n-1}(1-m_{n})=a_n, \qquad n\geq2,
\end{gather*}
with the sign of the dif\/ference of $d_1$ and $a_1$ depending on whether $m_1\in(0,1/2)$ or $(1/2,1)$. If $a_n\in(1/4, 1)$ for $n\geq1$, $k_n$ has to be
non-decreasing. This is a contradiction as $k_n=1-m_n$ for $n\geq1$.

The ef\/fect of complementary chain sequences in studying perturbation of Verblunsky coef\/f\/i\-cients given by~\eqref{eqn:gen_verb_coeff} has interesting consequences. In this context, we give the following result.

\begin{Theorem}\label{thm:comp_chn_seq_as_perturbation} Let $\{c_n\}_{n=1}^{\infty}$ and $\{d_{n+1}\}_{n=1}^{\infty}$ be, respectively, the real sequence and positive chain sequence as given in~\eqref{eqn:POP_ttrr_c_n-d_n}. Let $\big\{m_n^{(t)}\big\}_{n=0}^{\infty}$ be the minimal parameter sequence of the augmented positive chain sequence $\{d_n\}_{n=1}^{\infty}$, where $d_1 = (1-t)M_1$ and $\{M_{n+1}\}_{n=0}^{\infty}$ is the maximal parameter sequence of $\{d_{n+1}\}_{n=1}^{\infty}$. Let $\big\{k_n^{(t)}\big\}_{n=0}^{\infty}$ be the minimal parameter sequence of the positive chain sequence $\{a_n\}_{n=1}^{\infty}$ obtained as complementary to $\{d_n\}_{n=1}^{\infty}$. Set $\tau_n = \frac{1-ic_n}{1+ic_n} \tau_{n-1}$,
\begin{gather*}
 \alpha_{n-1}^{(t)} = \overline{\tau}_n \left[\frac{1-2m_n^{(t)}-ic_{n}}{1+ic_{n}}\right]
 \qquad \mbox{and} \qquad \beta_{n-1}^{(t)} = \overline{\tau}_n \left[\frac{1-2k_n^{(t)}-ic_{n}}{1+ic_{n}}\right],
\end{gather*}
for $n \geq 1$, with $\tau_{0} = 1$. Let $\mu^{(t)}(z)$ and $\nu^{(t)}(z)$ be, respectively, the probability measures having $\alpha_{n-1}^{(t)}$ and $\beta_{n-1}^{(t)}$ as the corresponding Verblunsky coefficients. Then the following can be stated:
\begin{enumerate}\itemsep=0pt
 \item[$1.$] For $0 < t < 1$, the measure $\mu^{(t)}(z)$ has a pure point of size $t$ at $z=1$, while $\nu^{(t)}(z)$ does not.

 \item[$2.$] $\beta_{n-1}^{(t)} = - \overline{\tau}_n \overline{\tau}_{n-1} \overline{\alpha}_{n-1}^{(t)}$, $n \geq 1$.

 \item[$3.$] For $n \geq 1$, if $c_n = (-1)^n c$, $c\in\mathbb{R}$, $\beta_{n-1}^{(t)}= - \frac{1-ic}{1+ic} \alpha_{n-1}^{(t)}$, $n\geq 1$.

 \item[$4.$] If $c_n = 0$, $n \geq 1$ then the Verblunsky coefficients, which are real, are such that $\beta_{n-1}^{(t)} = - \alpha_{n-1}^{(t)}$, $n\geq 1$.
\end{enumerate}
\end{Theorem}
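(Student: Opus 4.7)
The plan is to dispatch the four assertions in order, with Part~1 being the only one carrying real content and Parts~2--4 reducing to algebra. For Part~1 the pure-point assertion for $\mu^{(t)}$ is essentially built into the Uvarov construction recalled around \eqref{eqn:relation between measure with jump t and measure with zero jump}: the augmented chain sequence $\{d_n\}_{n=1}^{\infty}$ has $d_1=(1-t)M_1$ and hence $m_1^{(t)}=(1-t)M_1$, matching the characterisation $g_1=(1-\epsilon)M_1$ with $\epsilon=t$. For the claim that $\nu^{(t)}$ has no pure point at $z=1$, I would observe that because $0<t<1$ the augmented sequence $\{d_n\}_{n=1}^{\infty}$ is not an SPPCS (its minimal and maximal parameter sequences do not coincide); Lemma~\ref{lem:criteria2_sppcs_intermsof_comp_chn_seq} then forces the complementary sequence $\{a_n\}_{n=1}^{\infty}$ to be an SPPCS, so its minimal parameter sequence $\{k_n^{(t)}\}$ also serves as its maximal one, and the characterisation yields pure-point size zero for $\nu^{(t)}$.

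Part~2 is direct computation. Substituting $k_n^{(t)}=1-m_n^{(t)}$ into the defining formula for $\beta_{n-1}^{(t)}$ yields $\beta_{n-1}^{(t)}=-\overline{\tau}_n\bigl[(1-2m_n^{(t)})+ic_n\bigr]/(1+ic_n)$. On the other side, using $|\tau_{n-1}|=1$ together with $\tau_n/\tau_{n-1}=(1-ic_n)/(1+ic_n)$ one obtains $\overline{\tau}_{n-1}\tau_n=(1-ic_n)/(1+ic_n)$; plugging this into $\overline{\alpha}_{n-1}^{(t)}=\tau_n\bigl[(1-2m_n^{(t)})+ic_n\bigr]/(1-ic_n)$ and cancelling the $(1-ic_n)$ factors reproduces exactly the expression just found for $\beta_{n-1}^{(t)}$.

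Parts~3 and~4 are specialisations. Under $c_n=(-1)^nc$, a two-step induction from the recursion $\tau_n=\frac{1-ic_n}{1+ic_n}\tau_{n-1}$ shows that $\tau_n$ alternates between $1$ (for even $n$) and $(1+ic)/(1-ic)$ (for odd $n$), so $\overline{\tau}_n\overline{\tau}_{n-1}=(1-ic)/(1+ic)$ independently of parity; feeding this into the identity of Part~2 yields the formula of Part~3. Under $c_n=0$, every $\tau_n$ equals $1$ and the defining formula gives $\alpha_{n-1}^{(t)}=1-2m_n^{(t)}$, which is real, so the identity of Part~2 collapses to $\beta_{n-1}^{(t)}=-\alpha_{n-1}^{(t)}$. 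The only real obstacle I anticipate is the careful use of the parameter--pure-point dictionary in Part~1, specifically distinguishing the augmented sequence $\{d_n\}$ from its complementary $\{a_n\}$ and invoking Lemma~\ref{lem:criteria2_sppcs_intermsof_comp_chn_seq} in the correct direction; Parts~2--4 amount to bookkeeping with the unit-modulus M\"obius factors $(1-ic_n)/(1+ic_n)$ and the identity $\tau_n\overline{\tau}_n=1$.
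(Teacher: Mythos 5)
Your proposal matches the paper's proof essentially step for step: Part~1 uses the Uvarov/maximal-parameter characterisation for $\mu^{(t)}$ and Lemma~\ref{lem:criteria2_sppcs_intermsof_comp_chn_seq} applied to the augmented sequence to rule out a pure point for $\nu^{(t)}$; Part~2 is the same substitution $k_n^{(t)}=1-m_n^{(t)}$ combined with conjugation and $\overline{\tau}_{n-1}\tau_n=(1-ic_n)/(1+ic_n)$; Parts~3 and~4 specialise $\overline{\tau}_n\overline{\tau}_{n-1}$ exactly as the paper does. (Note that feeding Part~2 into Part~3 actually produces $-\frac{1-ic}{1+ic}\,\overline{\alpha}_{n-1}^{(t)}$ rather than $-\frac{1-ic}{1+ic}\,\alpha_{n-1}^{(t)}$, but this discrepancy is inherited from the theorem statement and is present in the paper's own argument as well.)
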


\begin{proof}First we observe that $\alpha_{n-1}^{(t)}$ are the generalized Verblunsky coef\/f\/icients of the measure~$\mu^{(t)}(z)$ as given by~\eqref{eqn:relation between measure with jump t and measure with zero jump}. Consequently, for $0 < t < 1$ the probability measure $\mu^{(t)}(z)$ has a~pure point of size $t$ at $z=1$. Since $d_1=(1-t)M_n$, choosing $M_0=t>0$, the sequence $\{t, M_1, M_2, M_3, \ldots\}$ is the maximal parameter sequence of $\{d_n\}_{n=1}^{\infty}$. Since $t > 0$, $\{d_n\}_{n=1}^{\infty}$ is a~non SPPCS and hence, by Lemma~\ref{lem:criteria2_sppcs_intermsof_comp_chn_seq} the sequence $\{a_n\}_{n=1}^{\infty}$ is a SPPCS so that $\big\{k_n^{(t)}\big\}_{n=0}^{\infty}$ is also its maximal parameter sequence. Thus, by results established in~\cite{Ranga_chn}, the measure $\nu^{(t)}(z)$ does not have a pure point at $z=1$. This proves the f\/irst part of the theorem.

Now to prove the second part, we f\/irst have
\begin{gather*}
\beta_{n-1}^{(t)} = \overline{\tau}_n \left[\frac{1-2k_n^{(t)}-ic_{n}}{1+ic_{n}}\right]=
\overline{\tau}_n \left[\frac{-1+2m_n^{(t)}-ic_{n}} {1+ic_{n}}\right].
\end{gather*}
By conjugation of the expression for $\alpha_{n-1}^{(t)}$, we have
\begin{gather*}
-\overline{\alpha}_{n-1}^{(t)} = \tau_n \left[\frac{-1+2m_n^{(t)}-ic_{n}}{1-ic_{n}}\right],
\end{gather*}
which leads to the second part of the theorem.

Clearly with $c_n = (-1)^n c$, $n \geq 1$ we have $\overline{\tau}_{2n} = 1$ and $\overline{\tau}_{2n+1} = \frac{1-ic}{1+ic}$. Thus, the third part of
 the theorem is established.

The last part follows by taking $\overline{\tau}_n \overline{\tau}_{n-1} = 1$, $n \geq 1$. This is only possible if $c_n = 0$, $n \geq 1$.
\end{proof}

The perturbation of the Verblunsky coef\/f\/icients in case of OPUC and of the recurrence coef\/f\/icients in case of the real line play an important role in the spectral theory of orthogonal polynomials. The reader is referred to \cite{GARZA} and \cite{FM-Pert} for some details. For a recent work in this direction, we refer to~\cite{Paco_co}.

The last two parts of Theorem~\ref{thm:comp_chn_seq_as_perturbation} are important cases of Aleksandrov transformation and, in the case of last part
 gives rise to second kind polynomials for the measure $\mu^{(t)}$~\cite{Simon1}. In this particular case, the recurrence relation~\eqref{eqn:POP_ttrr_c_n-d_n} assumes a very simple form, similar to that considered in~\cite{DG}.

In the next section, starting with particular minimal parameter sequences and assuming $c_n=0$, $n\geq1$, we construct the para-orthogonal polynomials and the related Szeg\H{o} polynomials to illustrate our results.

\section{An illustration involving Carath\'{e}odory functions}\label{sec-complement_carat}
In a series of papers \cite{JNT-Trig,JNT-Surv, JNT-Mom}, Jones et al.\ during their investigation of the connection between Szeg\H{o} polynomials and continued fractions introduced the following
\begin{gather}\label{eqn:PC_contfrac_def}
\delta_{0}-\frac{2\delta_0}{1}
\begin{array}{cc}\\$+$\end{array}
\frac{1}{\bar{\delta}_1z}
\begin{array}{cc}\\$+$\end{array}
\frac{\big(1-|\delta_1|^2\big)z}{\delta_1}
\begin{array}{cc}\\$+$\end{array}
\frac{1}{\bar{\delta}_2z}
\begin{array}{cc}\\$+$\end{array}
\frac{\big(1-|\delta_2|^2\big)z}{\delta_2z}
\begin{array}{cc}\\$+$\end{array}
\cdots.
\end{gather}
These are called Hermitian Perron--Carath\'eodory fractions or HPC-fractions and are also used to solve the trigonometric moment problem. They are completely determined by $\delta_n\in\mathbb{C}$, where $\delta_0\neq0$ and $|\delta_n|\neq1$ for $n\geq1$. Under the stronger conditions $\delta_0>0$ and $|\delta_n|<1$, for $n\geq1$, \eqref{eqn:PC_contfrac_def} is called a positive PC fraction (PPC-fractions). Let $\mathcal{P}_n(z)$ and $\mathcal{Q}_n(z)$ be respectively the
numerator and denominator of the $n^{th}$ approximant of a PPC-fraction where $\mathcal{Q}_n(z)$ is a~polynomial of degree~$n$ and $\mathcal{P}_n(z)$ of degree at most $n$. Then \cite[Theorems~3.1 and~3.2]{JNT-Mom} $\Phi_n(z)$ are precisely the odd ordered denominators $\mathcal{Q}_{2n+1}(z)$ and $\Phi_{n}^{*}(z)$ the even ordered denomina\-tors~$\mathcal{Q}_{2n}(z)$. The $\delta_n's$ are then given by $\delta_n=\Phi_n(0)$ and are called the Schur parameters or the ref\/lection coef\/f\/icients. This gives the following equivalent set of recurrence relations for the Szeg\H{o} polynomials:
\begin{gather*}
\Phi^{*}_{n}(z) = \bar{\delta}_{n}z \Phi_{n-1}{z}+\Phi_{n-1}^{*}(z),\\
\Phi_{n}(z) = \delta_n\Phi^{*}_{n}(z)+\big(1-|\delta_n|^2\big)z \Phi_{n-1}(z), \qquad n\geq1.
\end{gather*}
Further, if \eqref{eqn:PC_contfrac_def} is a positive PC-fraction, there exists a pair of formal power series
\begin{gather*}
\mathcal{L}_0=\mu_0+2\sum_{k=1}^{\infty}\mu_kz^k, \qquad \mathcal{L}_{\infty}=-\mu_0-2\sum_{k=1}^{\infty}\mu_{-k}z^{-k},
\end{gather*}
where $\mu_k$ are the moments as def\/ined earlier and such that
\begin{gather*}
\mathcal{L}_0-\Lambda_0\left(\frac{\mathcal{P}_{2n}} {\mathcal{Q}_{2n}}\right)=\mathcal{O}\big(z^{n+1}\big),
\qquad \mathcal{L}_{\infty}-\Lambda_{\infty}\left(\frac{\mathcal{P}_{2n+1}} {\mathcal{Q}_{2n+1}} \right)=\mathcal{O}\left(\frac{1}{z^{n+1}}\right).
\end{gather*}
Here, $\Lambda_0(\mathcal{R}(z))$ and $\Lambda_{\infty}(\mathcal{R}(z))$ are the Laurent series expansion of the rational function~$\mathcal{R}(z)$ about~0 and $\infty$ respectively. For details regarding correspondence of continued fractions to power series, see~\cite{JT,Lisa}.

For $|\zeta|<1$, the polynomials
\begin{gather*}
\Psi_n(z)=\int_{\partial\mathbb{D}}\frac{z+\zeta}{z-\zeta}(\Phi_n(z)-\Phi_n(\zeta))d\mu(\zeta), \qquad n\geq1,
\end{gather*}
are known in literature as the associated Szeg\H{o} polynomials or polynomials of the second kind~\cite{Gero}. They arise as the odd ordered numerators of
\eqref{eqn:PC_contfrac_def}. The function $-\Psi^{*}_{n}(z)$ is called the polynomial associated with $\Phi_{n}^{*}(z)$ and are the even ordered numerators in~\eqref{eqn:PC_contfrac_def}. It is also known that for $|z|<1$, there exists a function $\mathcal{C}(z)=\int_{\partial\mathbb{D}} \frac{\zeta+z}{\zeta-z}d\mu(\zeta)$ with $\operatorname{Re} \mathcal{C}(z)>0$ such that
\begin{gather*}
\mathcal{C}(z)- \frac{\Psi^{*}_{n}(z)}{\Phi^{*}_{n}(z)}= \mathcal{O}\big(z^{n+1}\big).
\end{gather*}
$\mathcal{C}(z)$ is called the Carath\'{e}odory function associated with the PPC-fraction~\eqref{eqn:PC_contfrac_def} or with the Szeg\H{o} polynomials $\Phi_{n}(z)$ obtained from this PPC-fraction. The ratio $\Psi_n(z)/\Phi_n(z)$ also converges to a~function $\hat{\mathcal{C}}(z)$ called the Carath\'{e}odory reciprocal of $\mathcal{C}(z)$~\cite{JNT-Surv} and is def\/ined by
\begin{gather*}
\mathcal{C}(z)= -\overline{\hat{\mathcal{C}}(1/\bar{z})}.
\end{gather*}
The convergence is uniform on compact subsets of $|z|<1$ and $|z|>1$ respectively. Also, $\mathcal{L}_0$ is the Taylor series expansion of $\mathcal{C}(z)$ about~0 and $\mathcal{L}_{\infty}$ is that of $\hat{\mathcal{C}}(z)$ about~$\infty$.

Consider the sequence $\{\delta_n\}_{n=1}^{\infty}$, which satisf\/ies $\delta_0>0$, $|\delta_n|<1$ and
\begin{gather}\label{eqn:schur_para_definition}
\delta_{n+1}-\delta_n=\delta_n\delta_{n+1}, \qquad n\geq1.
\end{gather}
Our aim in this section is to use a chain sequence to construct the Szeg\H{o} polynomials $\Phi_n^{(t)}(z)$, having $\delta_n\in\mathbb{R}$ and satisfying~\eqref{eqn:schur_para_definition} as the Verblunsky coef\/f\/icients. We will also use the complementary chain sequence to get another sequence of
Szeg\H{o} polynomials $\tilde{\Phi}_n^{(t)}(z)$ which has $-\delta_n$ as the Verblunsky coef\/f\/icients. The associated Carath\'{e}odory function in each case is also given and it is shown that there exists a relation between them.

We start with the sequence $\big\{m_n^{(t)}\big\}_{n=0}^{\infty}$, where $m_0^{(t)}=0$ and $m_n^{(t)}=(1-\delta_n)/2$, $n\geq1$. These minimal parameters are obtained by f\/irst substituting $c_k=0$, $k\geq1$ in the Verblunsky coef\/f\/icients~\eqref{eqn:gen_verb_coeff} and then equating them to $\delta_n$. The corresponding chain sequence is
\begin{gather*}
d_1=\frac{1-\delta_1}{2} \qquad\mbox{and}\qquad d_n=\frac{1}{4}(1+\delta_{n-1})(1-\delta_n)= \frac{1}{4}(1-2\delta_{n-1}\delta_n), \qquad n\geq2.
\end{gather*}
The following are two algebraic relations of $\delta_n$ which will be needed later and can be proved by simple induction using~\eqref{eqn:schur_para_definition}.
\begin{gather*}
\delta_1\delta_2+\delta_2\delta_3+\delta_3\delta_4+ \cdots+\delta_n\delta_{n+1}=\delta_{n+1}-\delta_1, \qquad n\in\mathbb{N}.
\end{gather*}
and
\begin{gather}\label{eqn:schur_para_alg_rel2}
\delta_n=\frac{\delta_{n+1}}{1+\delta_{n+1}}= \cdots= \frac{\delta_{n+k}}{1+k\delta_{n+k}}, \qquad k\in\mathbb{N}.
\end{gather}

\begin{Proposition}The monic polynomial
\begin{gather}\label{eqn:monic POP_ronning_ccs}
R_n(z)=1+\sum_{k=1}^{n}[1+2k(n-k)\delta_1\delta_n]z^k
\end{gather}
satisfies the recurrence relation
\begin{gather*}
R_{n+1}(z)=(z+1)R_{n}(z)-(1-2\delta_n\delta_{n+1})zR_{n-1}(z), \qquad n\geq1,
\end{gather*}
with the initial conditions, $R_{0}(z)=1$ and $R_{1}(z)=z+1$.
\end{Proposition}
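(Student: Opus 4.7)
The plan is direct verification by coefficient comparison. The initial conditions are immediate: the closed form gives $R_0(z)=1$ (empty sum) and $R_1(z)=1+[1+2\cdot 1\cdot 0\cdot \delta_1\delta_1]z=z+1$, matching the stated values; note too that the $k=n$ term of the sum always vanishes because $2n(n-n)=0$, which confirms that $R_n$ is monic of degree $n$.

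For the recurrence, set $r_{n,k}:=1+2k(n-k)\delta_1\delta_n$ for the coefficient of $z^k$ in $R_n(z)$, extended by $r_{n,k}=0$ for $k<0$ or $k>n$. Comparing coefficients of $z^k$ on both sides of $R_{n+1}(z)=(z+1)R_{n}(z)-(1-2\delta_n\delta_{n+1})zR_{n-1}(z)$ reduces the claim to the scalar identity
\[
r_{n+1,k}=r_{n,k}+r_{n,k-1}-r_{n-1,k-1}+2\delta_n\delta_{n+1}\,r_{n-1,k-1},
\]
which is trivial at $k=0$ and $k=n+1$. For $1\leq k\leq n$, substituting the closed form and cancelling the constant $1$'s reduces the task to verifying
\[
k(n{+}1{-}k)\delta_1\delta_{n+1}=\bigl[2k(n{-}k)+2k{-}n{-}1\bigr]\delta_1\delta_n-(k{-}1)(n{-}k)\delta_1\delta_{n-1}+\delta_n\delta_{n+1}+2(k{-}1)(n{-}k)\delta_1\delta_{n-1}\delta_n\delta_{n+1}.
\]

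To close the proof I would invoke relation \eqref{eqn:schur_para_alg_rel2}, which upon setting $n=1$ yields the explicit form $\delta_j=\delta_1/(1-(j-1)\delta_1)$ for every $j\geq 1$. Substituting these rational expressions into the displayed identity and clearing the common denominator $(1-(n-2)\delta_1)(1-(n-1)\delta_1)(1-n\delta_1)$ converts it into a polynomial identity in $\delta_1$ which is verified by direct expansion; a sanity check at small cases such as $n=2$, $k=1$ and $n=3$, $k=1$ confirms the match. Equivalently, one can telescope by repeatedly applying the defining relation \eqref{eqn:schur_para_definition}, $\delta_j\delta_{j+1}=\delta_{j+1}-\delta_j$, which lowers the degree of each adjacent product by one and collapses the quartic term $\delta_1\delta_{n-1}\delta_n\delta_{n+1}$ into linear combinations of the $\delta_j$'s that exactly balance the remainder. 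The main obstacle is therefore not conceptual but bookkeeping: one must manage signs carefully in the quartic contribution produced by the perturbation $2\delta_n\delta_{n+1}$, and verify that the linear-in-$k$ and $k$-independent terms in $n$ align on the two sides.
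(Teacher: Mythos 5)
Your proposal is correct and follows essentially the same route as the paper: both verify the recurrence by comparing the coefficient of $z^k$ on the two sides (the paper dresses this up as an induction on $n$) and reducing to a scalar identity among the $\delta_j$'s, which is then checked using the algebraic consequences of $\delta_{j+1}-\delta_j=\delta_j\delta_{j+1}$. The only difference is cosmetic and lies in the final algebra: the paper rewrites $\delta_1\delta_j$ as $(\delta_j-\delta_1)/(j-1)$ and collects the coefficients of the individual $\delta$'s, whereas you parametrize $\delta_j=\delta_1/(1-(j-1)\delta_1)$ and clear denominators to get a polynomial identity in $\delta_1$; your displayed reduction is the correct one and the resulting identity does hold.
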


\begin{proof} First, note that $R_{1}(z)$ given by~\eqref{eqn:monic POP_ronning_ccs} satisf\/ies the initial condition. Suppose $R_n(z)$ has this form and satisf\/ies the recurrence relation for $n=1,2,\dots,j$. We shall now show
\begin{gather}\label{eqn:complement_carat_ttrr_monic_poly}
R_{j+1}(z)+(1-2\delta_j\delta_{j+1})zR_{j-1}(z)=(z+1)R_{j}(z).
\end{gather}
Using \eqref{eqn:schur_para_alg_rel2}, the coef\/f\/icient of $z^k$ in the left-hand side of~\eqref{eqn:complement_carat_ttrr_monic_poly} is
\begin{gather}
1+2k(j-k+1)\delta_1\delta_{j+1}+ (1-2\delta_j\delta_{j+1})[1+2(k-1)(j-k)\delta_1\delta_{j-1}]\nonumber\\
\qquad{} =1+2\frac{k(j-k+1)}{j}(\delta_{j+1}-\delta_1)+1- 2(\delta_{j+1}-\delta_j)+2\frac{(k-1)(j-k)}{j-2} (\delta_{j-1}-\delta_1)\nonumber\\
\qquad\quad{} -\frac{2\cdot2(k-1)(j-k)}{j-2}(\delta_{j-1}-\delta_1)(\delta_{j+1}-\delta_j).\label{eqn:coeff_z^k_proof}
\end{gather}
It is easy to verify that the coef\/f\/icients of $\delta_{j+1}$ and $\delta_{j-1}$ vanish in~\eqref{eqn:coeff_z^k_proof}. The coef\/f\/icient of $\delta_1$ is
\begin{gather}
-\frac{2k(j-k+1)}{j}-\frac{2(k-1)(j-k)}{j-2}- \frac{2\cdot2(k-1)(j-k)}{j(j-2)}+\frac{2\cdot2(k-1) (j-k)}{(j-1)(j-2)}\nonumber\\
 \qquad{} =-\frac{2k(j-k)}{j-1}-\frac{2(k-1)(j-k+1)}{j-1}.\label{eqn:coeff_z^1_proof}
\end{gather}
Similarly, the coef\/f\/icient of $\delta_j$ is
\begin{gather}
2+\frac{2\cdot2(k-1)(j-k)}{j-1} =\frac{2k(j-k)}{j-1}+\frac{2(k-1)(j-k+1)}{j-1}.\label{eqn:coeff_delta_j_proof}
\end{gather}
Using \eqref{eqn:coeff_z^1_proof} and \eqref{eqn:coeff_delta_j_proof} in~\eqref{eqn:coeff_z^k_proof}, the coef\/f\/icient of~$z^k$ in the left-hand side of~\eqref{eqn:complement_carat_ttrr_monic_poly} is given by
\begin{gather*}
[1+2(k-1)(j-k+1)\delta_1\delta_j]+ [1+2k(j-k)\delta_1\delta_j],
\end{gather*}
which is nothing but the coef\/f\/icient of~$z^k$ in the right-hand side of~\eqref{eqn:complement_carat_ttrr_monic_poly}. Hence, by induction the proof is complete.
\end{proof}

We now obtain the Szeg\H{o} polynomials $\Phi_n^{(t)}(z)$ from the para-orthogonal polynomials $R_n(z)$ given by~\eqref{eqn:monic POP_ronning_ccs}. Using~\eqref{eqn:szego_from_gen_verb_coeff} and~\eqref{eqn:monic POP_ronning_ccs}, it can be seen that the coef\/f\/icient of~$z^k$, $1\leq k\leq n-1$, in~$\Phi_{n}^{(t)}(z)$ is~$-\delta_{n}(1-2k\delta_1)$. Hence, the Szeg\H{o} polynomials are given by
\begin{gather}\label{eqn:szego_poly_ronning_ccs}
\Phi_n^{(t)}(z)=z^n-\delta_n\big[(1-2(n-1)\delta_1)z^{n-1}+ \cdots+(1-2\delta_1)z+1\big],\qquad n\geq1,
\end{gather}
with $\alpha_{n-1}^{(t)}=-\delta_n$.

We now give the Carath\'{e}odory function associated with the parameters~$\delta_n$'s given by~\eqref{eqn:schur_para_definition}. Consider
\begin{gather*}
\mathcal{C}(z)= 1-\frac{2(1-\sigma)z}{1+(1-2\sigma)z}= \frac{1-z}{1+(1-2\sigma)z}, \qquad |z|<1,
\end{gather*}
where $0<\sigma<1$. That $\mathcal{C}(z)$ corresponds to a PPC-fraction with the parameter~$\gamma_n$, where
\begin{gather}\label{eqn:schur_para_ronning_ccs}
\gamma_n=\frac{1}{n+\frac{\sigma}{1-\sigma}}, \qquad n\geq1.
\end{gather}
can be shown by applying the algorithm~\cite{JNT-Surv} which is similar to the Schur algorithm. With the initial values $\mathcal{C}_0(z)=(1-z)/(1+(1-2\sigma)z)$, $\gamma_0=\mathcal{C}_0(0)=1$, def\/ine
\begin{gather*}
\mathcal{C}_1(z)=\frac{\gamma_0-\mathcal{C}_0(z)} {\gamma_0+\mathcal{C}_0(z)}, \qquad \gamma_1=\mathcal{C}'_1(0).
\end{gather*}
Then
\begin{gather*}
\mathcal{C}_1(z)=\frac{z}{1+\frac{\sigma}{1-\sigma}- \left(1-\frac{1-2\sigma}{1-\sigma}\right)z},\qquad\mbox{and}\qquad \gamma_1=\frac{1}{1+\frac{\sigma}{1-\sigma}}.
\end{gather*}
Assume for $k\geq1$ the following
\begin{gather*}
\mathcal{C}_k(z)=\frac{z}{k+\frac{\sigma}{1-\sigma}- \left(k-\frac{1-2\sigma}{1-\sigma}\right)z},\qquad \gamma_k=\mathcal{C}'_k(0).
\end{gather*}
This is true for $k=1$. Now def\/ine
\begin{gather}\label{eqn:carat_schur_algo}
\mathcal{C}_{k+1}(z)=\frac{\gamma_kz-\mathcal{C}_k(z)} {\gamma_k\mathcal{C}_k(z)-z}, \qquad n\geq1.
\end{gather}
It can be shown that
\begin{gather*}
\gamma_k=\frac{1-\sigma}{k-(k-1)\sigma}= \frac{1}{k+\frac{\sigma}{1-\sigma}},
\end{gather*}
which is also true for $k=1$. Simplifying \eqref{eqn:carat_schur_algo}, we obtain
\begin{gather*}
\mathcal{C}_{k+1}= \frac{z}{\left(k+1+\frac{\sigma}{1-\sigma}\right)- \left(k+1-\frac{1-2\sigma}{1-\sigma}\right)z},
\end{gather*}
from which $\gamma_{k+1}=\frac{1}{k+1+\frac{\sigma}{1-\sigma}}$. Hence by induction, \eqref{eqn:schur_para_ronning_ccs} and because of the uniqueness of the Carath\'{e}odory function that corresponds to a given PPC-fraction, the assertion follows. Moreover, observe that $\delta_n=-\gamma_n$ satisf\/ies~\eqref{eqn:schur_para_definition} and so $\Phi_n^{(t)}(0)=\frac{1}{n+\frac{\sigma}{1-\sigma}}$.

From the power series expansion of $\mathcal{C}(z)$, we also obtain the moments as
\begin{gather*}
\mu_0=1,\qquad \mu_k=(-1)^k(1-\alpha)(1-2\alpha)^{k-1}, \qquad k\geq1.
\end{gather*}
Using the fact that the Verblunsky coef\/f\/icients are all real, from \eqref{eqn:szego_norm}, we have
\begin{gather*}
\chi_{n}^{-2}=\prod_{k=1}^{n}\big(1-\delta_k^2\big).
\end{gather*}
Further
\begin{gather*}
\delta_n=\frac{1}{n+\frac{\sigma}{1-\sigma}}= \frac{1-\sigma}{n(1-\sigma)+\sigma}, \qquad n\geq1,
\end{gather*}
and we obtain
\begin{gather*}
\begin{split}
& 1-\delta_n^2 =\frac{[n(1-\sigma)+\sigma-1+\sigma][n(1-\sigma)+\sigma+1-\sigma]}{[n(1-\sigma)+\sigma]^2}\\
& \hphantom{1-\delta_n^2}{} =\frac{[(n-1)-(n-2)\sigma][(n+1)-n\sigma]}{[n-(n-1)\sigma]^2},
\end{split}
\end{gather*}
which yields the fact that
\begin{gather*}
\chi_n^{-2}=\frac{\sigma[(n+1)-n\sigma]}{[n-(n-1)\sigma]}.
\end{gather*}
Rewriting the right-hand expression as $\sigma\big(1+\frac{1-\sigma}{n(1-\sigma)+\sigma}\big)$ gives
\begin{gather*}
\chi_n^{-2}=\big\|\Phi_n^{(t)}(z)\big\|^2=\sigma(1+\delta_n),
\end{gather*}
which tends to $\sigma>0$ as $n\rightarrow\infty$.

Consider now the parameter sequence $\big\{k_n^{(t)}\big\}_{n=0}^{\infty}$, def\/ined by $k_0^{(t)}=0$ and $k_n^{(t)}=1-m_n^{(t)}=(1+\delta_n)/2$, $n\geq1$. From~\eqref{eqn:schur_para_definition}, it is easy to check that $1+\delta_{n+1}=1/(1-\delta_n)$, $n\geq1$. In this case, the constant sequence $\{1/4\}$
becomes the complementary chain sequence so that equation~\eqref{eqn:POP_ttrr_c_n-d_n} assumes the form
\begin{gather*}
\tilde{R}_{n+1}(z)=[1+z]\tilde{R}_{n}(z)-z\tilde{R}_{n-1}(z), \qquad n\geq1.
\end{gather*}
The polynomials satisfying the above recurrence relation are the palindromic polynomials $z^n+\lambda(z^{n-1}+\cdots+z)+1$. For $\lambda=1$, the para-orthogonal polynomials are the partial sums of the geometric series given by
\begin{gather*}
\tilde{R}_{n}(z)= 1+z+z^2+\cdots+z^n=\frac{1-z^{n+1}}{1-z}, \qquad n\geq1.
\end{gather*}
Then \eqref{eqn:szego_from_gen_verb_coeff} yields the Szeg\H{o} polynomial
\begin{gather}\label{eqn:szego_ronning_case}
\tilde{\Phi}_n^{(t)}(z)= z^n+\delta_{n}z^{n-1}+\cdots+\delta_nz+\delta_n, \qquad n\geq1,
\end{gather}
with $\alpha_{n-1}^{(t)}=-\delta_n$. The polynomials $\tilde{\Phi}_n^{(t)}(z)$ have been considered in~\cite{Ronn} where it is proved that
\begin{gather}\label{eqn:schur_para_ronning_case}
\tilde{\Phi}_n^{(t)}(0)= \delta_n=-\frac{1}{n+\frac{\sigma}{1-\sigma}}, \qquad n\geq1.
\end{gather}
Further, the corresponding Carath\'{e}odory function is $\tilde{\mathcal{C}}(z)=\frac{1+(1-2\sigma)z}{1-z}$, $|z|<1$, where $0<\sigma<1$. This is a special case when all the moments are equal to $\tilde{\mu}=(1-\sigma)$. We summarize the above facts as a theorem.

\begin{Theorem}Consider the real sequence $\{\delta_n\}_{n=0}^{\infty}$ satisfying $\delta_n-\delta_{n-1}=\delta_{n-1}\delta_n$, $n\geq1$ under the restrictions $\delta_0>0$ and $|\delta_n|<1$, $n\geq1$. If $\mathcal{C}(z)$ is a Carath\'{e}odory function whose PPC-fraction can be obtained from the minimal parameter sequence $\{m_n\}$, where $2m_n=1-\delta_n$, $n\geq1$, then $1-m_n$ gives the PPC-fraction corresponding to the Carath\'{e}odory function $1/\mathcal{C}(z)$.
\end{Theorem}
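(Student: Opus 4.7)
The plan is to assemble the explicit constructions carried out in this section into a verification of the reciprocal identity. The argument proceeds in three bookkeeping steps rather than via a deep structural statement, and I expect the genuine content to reside in a single algebraic identity.

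First, I would show that the hypotheses pin down both the sequence $\{\delta_n\}$ and the Carath\'eodory function $\mathcal{C}(z)$ in closed form. The recurrence $\delta_n-\delta_{n-1}=\delta_{n-1}\delta_n$ combined with $|\delta_n|<1$ and \eqref{eqn:schur_para_alg_rel2} forces $\delta_n=1/(n+\sigma/(1-\sigma))$ for a single parameter $\sigma\in(0,1)$ (up to the sign convention $\delta_n=-\gamma_n$ used above). The iterative Schur algorithm \eqref{eqn:carat_schur_algo}, carried out by induction in the preceding paragraphs, then identifies $\mathcal{C}(z)=(1-z)/(1+(1-2\sigma)z)$ as the unique Carath\'eodory function corresponding to this PPC-fraction.

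Second, I would transfer the same analysis to the complementary minimal parameter sequence $k_n=1-m_n=(1+\delta_n)/2$. Since $c_n=0$ throughout, Theorem~\ref{thm:comp_chn_seq_as_perturbation}(4) guarantees that the new Verblunsky coefficients satisfy $\beta_{n-1}^{(t)}=-\alpha_{n-1}^{(t)}$, so the reflection coefficients change sign. The resulting para-orthogonal polynomials collapse to the palindromic partial sums $\tilde R_n(z)=(1-z^{n+1})/(1-z)$, and \eqref{eqn:szego_from_gen_verb_coeff} produces $\tilde\Phi_n^{(t)}(z)$ of \eqref{eqn:szego_ronning_case}. Combining \eqref{eqn:schur_para_ronning_case} with the computation of moments from the power series expansion (equivalently, the identification recorded in \cite{Ronn}), the corresponding Carath\'eodory function is $\tilde{\mathcal{C}}(z)=(1+(1-2\sigma)z)/(1-z)$.

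The proof then closes with the trivial identity
\[
\mathcal{C}(z)\,\tilde{\mathcal{C}}(z) \;=\; \frac{1-z}{1+(1-2\sigma)z}\cdot\frac{1+(1-2\sigma)z}{1-z} \;=\; 1,
\]
from which $\tilde{\mathcal{C}}(z)=1/\mathcal{C}(z)$ is immediate. The only point requiring care is the uniqueness of the Carath\'eodory function associated with a given PPC-fraction; this is standard and is implicitly used twice above (once to identify $\mathcal{C}$ and once to identify $\tilde{\mathcal{C}}$). Thus the main conceptual obstacle, which is already overcome in the preceding constructions, is matching the Schur parameters produced by the complementary minimal parameters with those of the candidate function $1/\mathcal{C}(z)$; once this correspondence is in hand, the theorem reduces to the one-line algebraic verification displayed above.
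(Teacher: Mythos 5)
Your proposal is correct and follows essentially the same route as the paper: the theorem is stated there explicitly as a summary of the preceding constructions, namely the identification of $\mathcal{C}(z)=(1-z)/(1+(1-2\sigma)z)$ via the Schur-type algorithm, the identification of $\tilde{\mathcal{C}}(z)=(1+(1-2\sigma)z)/(1-z)$ for the complementary parameters $k_n=1-m_n$ via the constant chain sequence $\{1/4\}$ and R\o nning's result, and the closing observation $\mathcal{C}(z)\tilde{\mathcal{C}}(z)=1$. The only caveat is the sign of $\delta_n$ (the recurrence together with $|\delta_n|<1$ actually forces $\delta_n=-1/(n+\sigma/(1-\sigma))<0$ rather than the positive value you wrote), but you flag the sign convention $\delta_n=-\gamma_n$, this mirrors the paper's own bookkeeping, and it does not affect the argument.
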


Note that an equivalent statement using Schur parameters is given in~\cite{qRonn}. Further, let~$\mu^{(t)}(z)$ be the probability measure associated with the positive chain sequence $\{d_n\}_{n=1}^{\infty}$. Since its complementary chain sequence $\{1/4\}$ is not a SPPCS, by Lemma~\eqref{lem:criteria2_sppcs_intermsof_comp_chn_seq} $\{d_n\}_{n=1}^{\infty}$ is a SPPCS and hence $\mu^{(t)}(z)$ has zero jump $(t=0)$ at $z=1$. If $\nu^{(t)}(z)$ is the measure associated with $\{1/4\}$, $\nu^{(t)}(z)$ has a~jump $t=1/2$ at $z=1$. Finally as shown in~\cite{Ronn}, $\nu^{(1/2)}(\theta)$ is of the form,
\begin{gather*}
d\nu^{(1/2)}(\theta)= d\nu_{s}^{(1/2)}(\theta)+(1-\tilde{\mu})d(\theta),
\end{gather*}
where $d\nu_{s}^{(1/2)}(\theta)$ is a point measure with mass $\tilde{\mu}$ at $z$=1 and mass zero elsewhere.

We end this illustration with two observations which we state as remarks.

\begin{Remark}
Suppose the minimal parameters are given in terms of some variable $\varepsilon$. It follows that the coef\/f\/icients of the polynomial $R_n(z)$
satisfying \eqref{eqn:POP_ttrr_c_n-d_n} with $c_n=0$ for $n\geq1$ will be given in terms of $\varepsilon$. Since, it is clear that $R_n(z)$ is palindromic for the chain sequence $\{d_n\}=\{1/4\}$, $R_n(z)$ can always be expressed as the sum of two polynomials, one of them being a palindromic and the other one being such that it vanishes whenever $\varepsilon$ is chosen so that $d_n=1/4$.
\end{Remark}

\begin{Remark}\label{remark:limiting case for ronning parameter}
As $n\rightarrow\infty$, both the minimal parameter sequences approach~1/2. From the expressions~\eqref{eqn:szego_poly_ronning_ccs} and~\eqref{eqn:szego_ronning_case} it is clear that for f\/ixed~$z$, $\Phi_{n}^{(t)}(z)$ and $\tilde{\Phi}_{n}^{(t)}(z)$ approach~$z^n$ as~$n$ becomes large. The polynomials $z^n$ are called the Szeg\H{o}--Chebyshev polynomials and correspond to the standard Lebesgue measure on the unit circle.
\end{Remark}

\section{An illustration using Gaussian hypergeometric functions}\label{sec-complement_gauss_hyper}
The Gaussian hypergeometric function, with the complex parameters $a$, $b$ and $c$ is def\/ined by the power series
\begin{gather*}
F(a,b;c;z)=\sum_{n=0}^{\infty}\frac{(a)_n(b)_n}{(c)_n(1)_n}z^n, \qquad |z|<1,
\end{gather*}
where $c\neq0,-1,-2,\dots$ and $(a)_n$ is the Pochhammer symbol. With specialized values of the parameters $a$, $b$ and $c$, many elementary functions can be
represented by the Gaussian hypergeometric functions or their ratios. If $\operatorname{Re}(c-a-b)>0$, the series converges for $|z|=1$ to the value given by
\begin{gather*}
F(a,b;c;1)=\sum_{k=0}^{\infty}\frac{(a)_k(b)_k}{(c)_kk!}= \frac{\Gamma(c)\Gamma(c-a-b)}{\Gamma(c-a)\Gamma(c-b)}.
\end{gather*}
In case the series is terminating, we have the Chu--Vandermonde identity~\cite{AAR}
\begin{gather}\label{eqn:chu_vandermonde_identity}
F(-n,b;c;1)=\frac{(c-b)_n}{(c)_n}.
\end{gather}
Two hypergeometric functions $F(a_1,b_1;c_1;z)$ and $F(a_2,b_2;c_2,z)$ are said to be contiguous if the dif\/ference between the corresponding parameters
is at most unity. A~linear combination of two contiguous hypergeometric functions is again a hypergeometric function. Such relations are called contiguous relations and have been used to explore many hidden properties of the hypergeometric functions, for example by Gauss who found continued fraction expansions for ratios of hypergeometric functions~\cite{Rama} and hence for the special functions that these ratios represent. In some special cases, the contiguous
relations can also be related to the recurrence relations for orthogonal polynomials. Consider one such relation~\cite{AAR}
\begin{gather*}
(c-a)F(a-1,b;c;z)=(c-2a-(b-a)z)F(a,b;c;z)+ a(1-z)F(a+1,b;c;z),
\end{gather*}
which as shown in~\cite{Ranga_sze}, can be transformed to the three term recurrence relation
\begin{gather}\label{eqn:ranga_szego_conti_reln_ttrr}
\varrho_{n+1}(z)=\left(z+\frac{c-b+n}{b+n}\right)\varrho_n(z)- \frac{n(c+n-1)}{(b+n-1)(b+n)}\varrho_{n-1}(z), \qquad n\geq1,
\end{gather}
satisf\/ied by the monic polynomial
\begin{gather}\label{eqn:ranga_szego_conti_reln_polynomial}
\varrho_n(z)=\frac{(c)_n}{(b)_n}F(-n,b;c;1-z).
\end{gather}
It was also shown that for the specif\/ic values $b=\lambda\in\mathbb{R}$ and $c=2\lambda-1$, the polynomials~\eqref{eqn:ranga_szego_conti_reln_polynomial} are Szeg\H{o} polynomials. We note that with $b = \lambda+1$, $\varrho_n(z)$ given by~\eqref{eqn:ranga_szego_conti_reln_polynomial} are called the circular Jacobi polynomials \cite[Example~8.2.5]{Ismail}. For other specialized values of~$b$ and~$c$ in~\eqref{eqn:ranga_szego_conti_reln_ttrr}, $\varrho_n(z)$~also becomes the para-orthogonal polynomial.

Let $\lambda>-1/2\in\mathbb{R}$. Taking $b=\lambda+1$ and $c=2\lambda+2$, \eqref{eqn:ranga_szego_conti_reln_ttrr} reduces to
\begin{gather*}
\varrho_{n+1}(z)=(z+1)\varrho_{n}(z)- \frac{n(2\lambda+n+1)}{(\lambda+n)(\lambda+n+1)} z\varrho_{n-1}(z), \qquad n\geq1,
\end{gather*}
satisf\/ied by
\begin{gather*}
\varrho_{n}(z)=R_n(z)= \frac{(2\lambda+2)_n}{(\lambda+1)_n} F(-n,\lambda+1;2\lambda+2;1-z), \qquad n\geq1.
\end{gather*}
Consider now the sequence $\{d_{n+1}\}_{n=1}^{\infty}$, where
\begin{gather*}
d_{n+1}= \frac{1}{4}\frac{n(2\lambda+n+1)}{(\lambda+n)(\lambda+n+1)}, \qquad n\geq1.
\end{gather*}
As established in \cite[Example~3]{Swami}, for $\lambda>-1$, the sequence $\{d_{n+1}\}_{n=1}^{\infty}$ is a positive chain sequence and $\{\mathfrak{m}_n\}_{n=0}^{\infty}$, where
\begin{gather*}
\mathfrak{m}_{n} = \frac{n}{2(\lambda+n+1)}, \qquad n \geq 0,
\end{gather*}
is its minimal parameter sequence. When $-1/2 \geq \lambda > -1$, $\{\mathfrak{m}_{n}\}_{n=0}^{\infty}$ is also the maximal parameter sequence of $\{d_{n+1}\}_{n=1}^{\infty}$, which makes it a SPPCS. However, when $\lambda > -1/2$ then $\{d_{n+1}\}_{n=1}^{\infty}$ is not a~SPPCS and its maximal parameter sequence $\{M_{n+1}\}_{n=0}^{\infty}$ is such that
\begin{gather*}
M_{n+1} = \frac{2\lambda+n+1}{2(\lambda+n+1)}, \qquad n \geq 0.
\end{gather*}
The coef\/f\/icients $d_{n+1}$, $n \geq 1$ are the same coef\/f\/icients occurring in the recurrence formula for ultraspherical (or Gegenbauer) polynomials.

Further, for $\lambda > -1/2$ and $0 \leq t < 1$, if $\big\{m_n^{(t)}\big\}_{n=0}^{\infty}$ is the minimal parameter sequence of the positive chain sequence $\{d_n\}_{n=1}^{\infty}$, obtained by adding $d_{1} = (1-t)M_{1} $ to $\{d_{n+1}\}_{n=1}^{\infty}$, then from~\eqref{eqn:szego_from_gen_verb_coeff}
\begin{gather*}
\Phi_n^{(t)}(z)=R_{n}(z)-2\big(1-m_{n}^{(t)}\big)R_{n-1}(z), \qquad n\geq1
\end{gather*}
and are the monic OPUC with respect to the measure $\mu^{(t)}(z)$, where $\mu^{(t)}(z)$ is as def\/ined by~\eqref{eqn:relation between measure with jump t and measure with zero jump}. To f\/ind $\mu^{(t)}(z)$, we f\/irst f\/ind the measure $\mu^{(0)}(z)$ arising when $\{d_n\}_{n=1}^{\infty}$ becomes a SPPCS $(t=0)$. As shown in~\cite{Ranga_sze}, the monic OPUC are given by
\begin{gather*}
\Phi_n^{(0)}(z) =R_{n}(z)-2(1-M_{n})R_{n-1}(z) =\frac{(2\lambda+1)_n}{(\lambda+1)_n}F(-n,\lambda+1;2\lambda+1;1-z), \qquad n\geq1.
\end{gather*}
Using the identity \eqref{eqn:chu_vandermonde_identity}, the Verblunsky coef\/f\/icients are given by
\begin{gather}\label{eqn:verblunsky coeff for ranga szego lambda}
\alpha_{n-1}^{(0)}=-\Phi_{n}^{(0)}(0)=- \frac{(\lambda)_n}{(\lambda+1)_n}, \qquad n\geq1.
\end{gather}
The Verblunsky coef\/f\/icients $\alpha_{n-1}^{(0)}$ are associated with the non-trivial probability measure given by~\cite{Ranga_sze}
\begin{gather*}
d\mu^{(0)}\big(e^{i\theta}\big)= \tau^{(\lambda)}\sin^{2\lambda}(\theta/2)d\theta,
\end{gather*}
where
\begin{gather*}
\tau^{(\lambda)}= \frac{|\Gamma(1+\lambda)|^2}{\Gamma(2\lambda+1)}4^\lambda.
\end{gather*}
Hence
\begin{gather*}
\int_{\partial\mathbb{D}}f(\zeta)d\mu^{(t)}(\zeta)= (1-t)\tau^{(\lambda)}\int_{0}^{2\pi}f\big(e^{i\theta}\big) \sin^{2\lambda}(\theta/2)d\theta+tf(1).
\end{gather*}

Further characterization of Szeg\H{o} polynomials is provided below as it is not possible to f\/ind closed form expressions for the coef\/f\/icients of the para-orthogonal polynomials and Szeg\H{o} polynomials. Since $\{R_n(z)\}$, depends on the parameter $b$ $(=\lambda+1)$, in what follows, we denote~$R_n(z)$ by~$R_n^{(b)}(z)$. We also denote~$c_n$ and~$d_{n}$ by~$c_n^{(b)}$ and~$d_n^{(b)}$ respectively. Now, note that if
\begin{gather*}
Q_n^{(b)}(z) = \frac{1}{2(1-t)M_1}\int_{\mathbb{T}} \frac{R_n^{(b)}(z) - R_n^{(b)}(\zeta)} {z - \zeta} (1-\zeta)d \mu^{(t)}(\zeta), \qquad n \geq 0,
\end{gather*}
then $\big\{Q_n^{(b)}(z)\big\}_{n=0}^{\infty}$ satisf\/ies
\begin{gather*}
Q_{n+1}^{(b)}(z) = \big[\big(1+ic_{n+1}^{(b)}\big)z + \big(1-ic_{n+1}^{(b)}\big)\big] Q_{n}^{(b)}(z) - 4 d_{n+1}^{(b)} z Q_{n-1}^{(b)}(z), \qquad n \geq 1,
\end{gather*}
with $Q_{0}^{(b)}(z) = 0$ and $Q_{1}^{(b)}(z) = 1$. That is, the three term recurrence for $\big\{Q_n^{(b)}(z)\big\}_{n=0}^{\infty}$ is the same as for $\big\{R_n^{(b)}(z)\big\}_{n=0}^{\infty}$, with the dif\/ference being only on the initial conditions. The polynomials $\big\{Q_n^{(b)}(z)\big\}$ are generally called the numerator polynomials associated with $\big\{R_n^{(b)}(z)\big\}$. Further, observe that the three term recurrence for $\big\{Q_n^{(b)}(z)\big\}_{n=0}^{\infty}$ can also be given in the shifted form
\begin{gather}\label{Eq-TTRR-Qn(b)}
 Q_{n+2}^{(b)}(z) = \big[\big(1+ic_{n+2}^{(b)}\big)z + \big(1-ic_{n+2}^{(b)}\big)\big] Q_{n+1}^{(b)}(z) - 4 d_{n+2}^{(b)}z Q_{n}^{(b)}(z), \qquad n \geq 1,
\end{gather}
with $Q_{1}^{(b)}(z) = 1$ and $Q_{2}^{(b)}(z) = \big(1+ic_{2}^{(b)}\big)z + \big(1-ic_{2}^{(b)}\big)$.

Consider now the parameter sequence given by $k_n^{(t)}=1-m_n^{(0)} = n/[2(\lambda+n)]$ for $n\geq1$. For sake of clarity, we would like to note that $t$ need not be necessarily~0. It depends on whether the resulting chain sequence for $\big\{k_n^{(t)}\big\}$, given by
\begin{gather}\label{eqn:ccs-hypergeometric-case}
a_1^{(b)}=\frac{1}{2\lambda+2} \qquad\mbox{and}\qquad a_{n+1}^{(b)} =\frac{1}{4}\frac{(n+1)(2\lambda+n)} {(\lambda+n)(\lambda+n+1)}, \qquad n\geq 1,
\end{gather}
is a SPPCS or not.

Let $\nu^{(t)}(z)$ be the measure associated with the Verblunsky coef\/f\/icients $\big\{\beta_{n-1}^{(t)}\big\}_{n=1}^{\infty}$ given by
\begin{gather*}
\beta_{n-1}^{(t)} = \overline{\tau}_n \left[\frac{1-2k_n^{(t)}-ic_{n}^{(b)}}{1+ic_{n}^{(b)}}\right], \qquad n \geq 1.
\end{gather*}
Following Theorem \ref{thm:comp_chn_seq_as_perturbation}, the corresponding OPUC are
\begin{gather*}
\tilde{\Phi}_n^{(t)}(z)=\frac{\tilde{R}_n^{(b)}(z)- 2\big(1-k_n^{(t)}\big)\tilde{R}_{n-1}^{(b)}(z)} {\prod\limits_{k=1}^{n}\big(1+ic_k^{(b)}\big)}, \qquad n\geq1,
\end{gather*}
where the polynomials $\tilde{R}_n^{(b)}$ are given by
\begin{gather}\label{Eq-TTRR-tildeRn(b)}
 \tilde{R}_{n+1}^{(b)}(z) = \big[\big(1+ic_{n+1}^{(b)}\big)z + \big(1-ic_{n+1}^{(b)}\big)\big] \tilde{R}_{n}^{(b)}(z) - 4 a_{n+1}^{(b)} z \tilde{R}_{n-1}^{(b)}(z), \qquad n\geq 1,
\end{gather}
with $\tilde{R}_{0}^{(b)}(z) = 1$ and $\tilde{R}_{1}^{(b)}(z)=\big(1+ic_{1}^{(b)}\big)z+ \big(1-ic_{1}^{(b)}\big)$. Observing that $c_{n}^{(b)}=c_{n+1}^{(b-1)}$, $a_{n+1}^{(b)}=d_{n+2}^{(b-1)}$, $n \geq 1$, we have from~\eqref{Eq-TTRR-Qn(b)} and~\eqref{Eq-TTRR-tildeRn(b)}
\begin{gather*}
 \tilde{R}_{n}^{(b)}(z) = Q_{n+1}^{(b-1)}(z), \qquad n \geq 0,
\end{gather*}
and thus
\begin{gather*}
 \tilde{\Phi}_n^{(t)}(z)= \frac{Q_{n+1}^{(b-1)}(z)-2\big(1-k_n^{(t)}\big)Q_{n}^{(b-1)}(z)} {\prod\limits_{k=1}^{n}\big(1+ic_{k+1}^{(b-1)}\big)}, \qquad n\geq1.
\end{gather*}
That is, if $R_n^{(b)}(z)$ generates the OPUC $\Phi_n^{(t)}(z)$, $Q_n^{(b-1)}(z)$, which are the numerator polynomials for $R_n^{(b-1)}(z)$ generates the OPUC $\tilde{\Phi}_{n}^{(t)}(z)$ associated with the complementary chain sequences. We note that, in the present case too, $c_n^{(b)} (= c_n) =0$, $n\geq1$ and so by Theorem~\ref{thm:comp_chn_seq_as_perturbation} $\beta_{n-1}^{(t)}=-\alpha_{n-1}^{(0)}$ for $n\geq1$. Hence $d\nu^{(t)}(z)$ are the Aleksandrov measures associated with $d\mu^{(0)}(z)$~\cite{Simon1}.

Further, we note that such Szeg\H{o} polynomials result from perturbations of the Verblunsky coef\/f\/icients obtained in Section~\ref{sec-complement_carat}. Indeed, for $\sigma=\lambda/(1+\lambda)$, $\{\lambda\delta_n\}$ corresponds to the Verblunsky coef\/f\/icients given by~\eqref{eqn:verblunsky coeff for ranga szego lambda}, wheras by Verblunsky theorem, $\{\lambda\gamma_n\}$ corresponds to those given by the complementary chain sequence $\{a_{n+1}^{(b)}\}$ given by~\eqref{eqn:ccs-hypergeometric-case}. Here $\{\delta_n\}$ and $\{\gamma_n\}$ are the ones chosen respectively by~\eqref{eqn:schur_para_ronning_ccs} and~\eqref{eqn:schur_para_ronning_case}.

Further, when $\big\{a_{n+1}^{(b)}\big\}_{n=1}^{\infty}$ is the constant chain sequence $\{1/4\}$, $\tilde{R}_{n}^{(b)}(z)$ are the palindromic polynomials given by
\begin{gather*}
\tilde{R}_{n}^{(b)}(z)= z^n+\nu^{(\lambda)}\big(z^{n-1}+\cdots+z\big)+1, \qquad n\geq1,
\end{gather*}
where $\nu^{(\lambda)}$ is a constant depending on $\lambda$. Here we study the cases $\lambda=0$ and $\lambda=1$ for which the complementary chain sequence $a_{n+1}^{(b)}=1/4$.

{\bf Case 1}, $\lambda=0$. Let
 \begin{gather*}
 \tilde{R}_{n}^{(b)}(z)=z^n+\nu^{(0)}\big(z^{n-1}+\cdots+z\big)+1, \qquad n\geq1.
 \end{gather*}
The complementary chain sequence is $\{1/2,1/4,1/4,\dots\}$ which is known to be a SPPCS. Hence $\big\{k_n^{(t)}\big\}_{n=0}^{\infty}$ where $k_0^{(t)}=0$, $k_n^{(t)}=1/2$, $n\geq1$ is also the maximal parameter sequence implying that $t=0$ and so
\begin{gather*}
\tilde{\Phi}_{n}^{(0)}(z)=z^n+\big(\nu^{(0)}-1\big)z^{n-1}.
\end{gather*}
For $\nu^{(0)}=1$, $\tilde{\Phi}_n^{(0)}(z)=z^n$ and from Remark~\ref{remark:limiting case for ronning parameter}, $\lambda=0$ can be viewed as the limiting case for the Verblunsky coef\/f\/icients obtained in Section~\ref{sec-complement_carat}. Note that the Verblunsky coef\/f\/icients are~0, as can be verif\/ied from~\eqref{eqn:verblunsky coeff for ranga szego lambda}.

{\bf Case 2}, $\lambda=1$. Let
\begin{gather*}
\tilde{R}_{n}^{(b)}(z)= z^n+\nu^{(1)}\big(z^{n-1}+\cdots+z\big)+1, \qquad n\geq1.
\end{gather*}
The complementary chain sequence is $\{1/4,1/4,1/4,\dots\}$ and $k_0^{(t)}=0$, $k_n^{(t)}=n/2(n+1)$, $n\geq1$. In this case, $t=1/2$ and
\begin{gather*}
\tilde{\Phi}_{n}^{(1/2)}(z)= z^n+\left(\nu^{(1)}-\frac{n+2}{n+1}\right)z^{n-1}-\frac{\nu^{(1)}}{n+1}\big(z^{n-2}+\cdots+z\big)-\frac{1}{n+1}, \qquad n\geq1,
\end{gather*}
so that the Verblunsky coef\/f\/icients are given by $1/(n+1)$. Again it can be verif\/ied from~\eqref{eqn:verblunsky coeff for ranga szego lambda} that the Verblunsky coef\/f\/icients corresponding to $\lambda=1$ are $(1)_n/(2)_n=1/(n+1)$. Finally, for $\nu^{(1)}=0$, $\tilde{R}_n^{(b)}=z^n+1$, which has been
 considered as Example~1 in~\cite{Swami}.

\subsection*{Acknowledgements}
The authors wish to thank the anonymous referees for their constructive criticism that resulted in signif\/icant improvement of the content leading to the f\/inal version. The work of the second author was supported by funds from CNPq, Brazil (grants 475502/2013-2 and 305073/2014-1) and FAPESP, Brazil (grant 2009/13832-9).

\pdfbookmark[1]{References}{ref}
\LastPageEnding


\begin{thebibliography}{99}
\footnotesize\itemsep=0pt

\bibitem{AAR}
Andrews G.E., Askey R., Roy R., Special functions, \href{http://dx.doi.org/10.1017/CBO9781107325937}{\textit{Encyclopedia of
 Mathematics and its Applications}}, Vol.~71, Cambridge University Press,
 Cambridge, 1999.

\bibitem{Swami}
Bracciali C.F., Sri~Ranga A., Swaminathan A., Para-orthogonal polynomials on
 the unit circle satisfying three term recurrence formulas, \href{http://dx.doi.org/10.1016/j.apnum.2016.05.008}{\textit{Appl.
 Numer. Math.}} \textbf{109} (2016), 19--40, \href{http://arxiv.org/abs/1406.0719}{arXiv:1406.0719}.

\bibitem{Ranga_fav}
Castillo K., Costa M.S., Sri~Ranga A., Veronese D.O., A {F}avard type theorem
 for orthogonal polynomials on the unit circle from a three term recurrence
 formula, \href{http://dx.doi.org/10.1016/j.jat.2014.05.007}{\textit{J.~Approx. Theory}} \textbf{184} (2014), 146--162,
 \href{http://arxiv.org/abs/1309.0995}{arXiv:1309.0995}.

\bibitem{Paco_co}
Castillo K., Marcell{\'a}n F., Rivero J., On co-polynomials on the real line,
 \href{http://dx.doi.org/10.1016/j.jmaa.2015.02.063}{\textit{J.~Math. Anal. Appl.}} \textbf{427} (2015), 469--483.

\bibitem{Chihara}
Chihara T.S., An introduction to orthogonal polynomials, \textit{Mathematics and
 its Applications}, Vol.~13, Gordon and Breach Science Publishers, New York~-- London~-- Paris, 1978.

\bibitem{Ranga_chn}
Costa M.S., Felix H.M., Sri~Ranga A., Orthogonal polynomials on the unit circle
 and chain sequences, \href{http://dx.doi.org/10.1016/j.jat.2013.04.009}{\textit{J.~Approx. Theory}} \textbf{173} (2013), 14--32.

\bibitem{DG}
Delsarte P., Genin Y.V., The split {L}evinson algorithm, \href{http://dx.doi.org/10.1109/TASSP.1986.1164830}{\textit{IEEE Trans.
 Acoust. Speech Signal Process.}} \textbf{34} (1986), 470--478.

\bibitem{Freud}
Freud G., Orthogonal polynomials, Pergamon Press, Oxford, 1971.

\bibitem{GARZA}
Garza L., Hern{\'a}ndez J., Marcell{\'a}n F., Spectral transformations of
 measures supported on the unit circle and the {S}zeg{\H{o}} transformation,
 \href{http://dx.doi.org/10.1007/s11075-008-9156-0}{\textit{Numer. Algorithms}} \textbf{49} (2008), 169--185.

\bibitem{Gero}
Geronimus L.Ya., Orthogonal polynomials: {E}stimates, asymptotic formulas, and
 series of polynomials orthogonal on the unit circle and on an interval,
 Consultants Bureau, New York, 1961.

\bibitem{Golli}
Golinskii L., Quadrature formula and zeros of para-orthogonal polynomials on
 the unit circle, \href{http://dx.doi.org/10.1023/A:1019765002077}{\textit{Acta Math. Hungar.}} \textbf{96} (2002), 169--186.

\bibitem{Ismail}
Ismail M.E.H., Classical and quantum orthogonal polynomials in one variable,
 \href{http://dx.doi.org/10.1017/CBO9781107325982}{\textit{Encyclopedia of Mathematics and its Applications}}, Vol.~98, Cambridge
 University Press, Cambridge, 2005.

\bibitem{JNT-Trig}
Jones W.B., Nj{\aa}stad O., Thron W.J., Continued fractions associated with
 trigonometric and other strong moment problems, \href{http://dx.doi.org/10.1007/BF01893426}{\textit{Constr. Approx.}}
 \textbf{2} (1986), 197--211.

\bibitem{JNT-Surv}
Jones W.B., Nj{\aa}stad O., Thron W.J., Schur fractions,
 {P}erron--{C}arath\'eodory fractions and {S}zeg{\H{o}} polyno\-mials, a survey,
 in Analytic Theory of Continued Fractions,~{II} ({P}itlochry/{A}viemore,
 1985), \href{http://dx.doi.org/10.1007/BFb0075938}{\textit{Lecture Notes in Math.}}, Vol.~1199, Springer, Berlin, 1986,
 127--158.

\bibitem{JNT-Mom}
Jones W.B., Nj{\aa}stad O., Thron W.J., Moment theory, orthogonal polynomials,
 quadrature, and continued fractions associated with the unit circle,
 \href{http://dx.doi.org/10.1112/blms/21.2.113}{\textit{Bull. London Math. Soc.}} \textbf{21} (1989), 113--152.

\bibitem{JT}
Jones W.B., Thron W.J., Continued fractions. Analytic theory and applications,
 \textit{Encyclopedia of Mathematics and its Applications}, Vol.~11,
 Addison-Wesley Publishing Co., Reading, Mass., 1980.

\bibitem{Lisa}
Lorentzen L., Waadeland H., Continued fractions. {V}ol.~1. Convergence theory,
 \href{http://dx.doi.org/10.2991/978-94-91216-37-4}{\textit{Atlantis Studies in Mathematics for Engineering and Science}}, Vol.~1,
 2nd ed., Atlantis Press, Paris, World Scientif\/ic Publishing Co. Pte. Ltd.,
 Hackensack, NJ, 2008.

\bibitem{FM-Pert}
Marcell{\'a}n F., Dehesa J.S., Ronveaux A., On orthogonal polynomials with
 perturbed recurrence relations, \href{http://dx.doi.org/10.1016/0377-0427(90)90028-X}{\textit{J.~Comput. Appl. Math.}} \textbf{30}
 (1990), 203--212.

\bibitem{Rama}
Ramanathan K.G., Hypergeometric series and continued fractions, \href{http://dx.doi.org/10.1007/BF02837830}{\textit{Proc.
 Indian Acad. Sci. Math. Sci.}} \textbf{97} (1987), 277--296.

\bibitem{Ronn}
R{\o}nning F., P{C}-fractions and {S}zeg{\H{o}} polynomials associated with
 starlike univalent functions, \href{http://dx.doi.org/10.1007/BF02141945}{\textit{Numer. Algorithms}} \textbf{3} (1992),
 383--391.

\bibitem{qRonn}
R{\o}nning F., A {S}zeg{\H{o}} quadrature formula arising from {$q$}-starlike
 functions, in Continued Fractions and Orthogonal Functions ({L}oen, 1992),
 \textit{Lecture Notes in Pure and Appl. Math.}, Vol.~154, Dekker, New York,
 1994, 345--352.

\bibitem{Simon1}
Simon B., Orthogonal polynomials on the unit circle. {P}art~1. Classical
 theory, \textit{American Mathematical Society Colloquium Publications},
 Vol.~54, Amer. Math. Soc., Providence, RI, 2005.

\bibitem{Simon2}
Simon B., Orthogonal polynomials on the unit circle. {P}art~2. Spectral theory,
 \textit{American Mathematical Society Colloquium Publications}, Vol.~54,
 Amer. Math. Soc., Providence, RI, 2005.

\bibitem{Ranga_sze}
Sri~Ranga A., Szeg{\H{o}} polynomials from hypergeometric functions,
 \href{http://dx.doi.org/10.1090/S0002-9939-2010-10592-0}{\textit{Proc. Amer. Math. Soc.}} \textbf{138} (2010), 4259--4270.

\bibitem{Szego}
Szeg{\H{o}} G., Orthogonal polynomials, \textit{American Mathematical Society,
 Colloquium Publications}, Vol.~23, 4th ed., Amer. Math. Soc., Providence, R.I., 1975.

\bibitem{Wall}
Wall H.S., Analytic theory of continued fractions, D.~Van Nostrand
 Company, Inc., New York, NY, 1948.

\bibitem{Wong}
Wong M.L., First and second kind paraorthogonal polynomials and their zeros,
 \href{http://dx.doi.org/10.1016/j.jat.2006.12.007}{\textit{J.~Approx. Theory}} \textbf{146} (2007), 282--293,
 \href{http://arxiv.org/abs/math.CA/0703242}{math.CA/0703242}.

\end{thebibliography}
\end{document}